\newtheorem{theorem}{Theorem}[section]
\newtheorem{lemma}[theorem]{Lemma}
\newtheorem{proposition}[theorem]{Proposition}
\theoremstyle{definition}
\newtheorem{remark}[theorem]{Remark}
\newcommand{\R}{{\mathbb R \,}}
\newcommand{\QQ}{\mathcal{Q}}
\newcommand{\BHO}{B_{H_0}}
\numberwithin{equation}{section}
\begin{document}
    \title[Stress concentration in nonlinear perfect conductivity problems]{Stress concentration for closely located inclusions in nonlinear perfect conductivity problems}

  \date{}
%
%
%
%
%
\author{Giulio Ciraolo and Angela Sciammetta}

 \address{Giulio Ciraolo \\ Dipartimento di Matematica e Informatica \\ Universit\`a di Palermo\\ Via Archirafi 34\\ 90123 Palermo\\ Italy}
\email{giulio.ciraolo@unipa.it}

 \address{Angela Sciammetta \\ Dipartimento di Matematica e Informatica \\ Universit\`a di Palermo\\ Via Archirafi 34\\ 90123 Palermo\\ Italy}
\email{angela.sciammetta@unipa.it}

\keywords{Gradient blow-up, Finsler $p$-Laplacian, perfect conductor}
    \subjclass{Primary: 35J25, 35B44, 35B50; Secondary: 35J62, 78A48, 58J60.}

\begin{abstract}
We study the stress concentration, which is the gradient of the solution, when two smooth inclusions are closely located in a possibly anisotropic medium $\Omega \subset \mathbb{R}^N$, $N \geq 2$. The governing equation may be degenerate of $p-$Laplace type, with $1<p \leq N$. We prove optimal $L^\infty$ estimates for the blow-up of the gradient of the solution as the distance between the inclusions tends to zero. 
\end{abstract}
\maketitle


\section{Introduction}
When two inclusions are closely located, it may occur that the stress concentrates in some region and it may cause a failure if any of the principal material strains exceed their respective tensile failure strains. Hence, a theoretical study predicting the possible failure initiation is of great importance for the applications and, in the last two decades, quantitative results for the stress concentration in composite materials have been the goal of many studies.

Our study originates from the paper of Babu\v{s}ka et al. \cite{BabuskaEtc}, where the problem of smooth inclusions closely located in a background linear material was studied numerically. From the mathematical point of view, one may consider a domain $\Omega \subset \mathbb{R}^N$, $N \geq 2$, representing the background matrix, and two inclusions $D_\delta^1,D_\delta^2 \subset \Omega$ which are located at \emph{small} distance $\delta$ and far from the boundary of $\Omega$. The modeling problem is formulated as follows
\begin{equation} \label{pbLinear1}
\begin{cases}
\text{div}\left(a_k(x)\nabla u \right) = 0& \hbox{in } \,  \Omega, \\
u=\varphi & \hbox{on } \, \partial \Omega,
\end{cases}
\end{equation}
where  $\varphi \in C^0(\partial \Omega)$ is a potential prescribed on the boundary of $\Omega$ and
\begin{equation*}
a_k(x)=\left\{
  \begin{array}{ll}
    1, & \hbox{$\Omega \setminus ( D^1_\delta \cup D^2_\delta)$,} \\
    k, & \hbox{$D^1_\delta \cup D^2_\delta$,}
  \end{array}
\right.
\end{equation*}
with $k \in (0,+\infty)$ (see for instance \cite{BaoLiYin}). In \cite{BabuskaEtc} the authors showed numerically that $\|\nabla u_\delta\|_{L^\infty(\Omega)}$ is bounded independently of the distance $\delta$ between $D^1_\delta$ and $D^2_\delta$. Later, Bonnetier and Vogelius \cite{BonnVog} rigorously proved this result for $N=2$ when  $D^1_\delta$ and $D^2_\delta$ are two unit balls, and Li and Vogelius \cite{LiVog} extended the results to general second order elliptic equations with piecewise smooth coefficients. The problem was also studied for general second order elliptic systems by Li and Nirenberg in \cite{LiNir}.

The behavior of the gradient of the solution may be very different when $k$ degenerates to zero or infinity and one may have stress concentration close to the points where the inclusions touch at the limit $\delta=0$. 

In this paper, we are interested in the perfect conductivity case, i.e. when $k=+\infty$. This case is modeled by the problem\begin{equation} \label{perf_cond_isotr}
\begin{cases}
\Delta u = 0 & \text{in } \Omega_\delta \\
|\nabla u|=0 & \text{in } D^i_\delta \,, i=1,2, \\
\displaystyle\int_{\partial D^i_\delta} u_\nu = 0 & i=1,2,  \\
u = \varphi & \text{on } \partial \Omega \,,
\end{cases}
\end{equation}
where $\nu$ denotes the outward normal to $D^i_\delta$, $i=1,2$, and we set 
$$
\Omega_\delta= \Omega \setminus \overline{D_\delta^1 \cup D_\delta^2} 
$$
 (see for instance \cite{BaoLiYin}). In the case of smooth inclusions, it has been proved that the optimal blow-up rate of $|\nabla u|$ is $\delta^{-1/2}$ for $N=2$, it is $(\delta |\log \delta|)^{-1}$ for $N=3$  and $\delta^{-1}$ for $N \geq 4$, see \cite{ACKLY,AKL,Ref3,BaoLiLi,BaoLiYin,BaoLiYin_II,BaoLiYin_III,BaoLiYin_IV,BaoLiYin_V,KLY,Kangy_Yu2017,Kang_Yun2017,LL,Yun,YunII} and references therein. In addition to its mathematical interest, the characterization of the gradient blow-up is relevant for the applications in composite materials. Indeed, numerical simulations related to problem \eqref{perf_cond_isotr} may be difficult to perform due to the presence of stress concentration, and in particular in the choice of the mesh which has to be chosen finer and finer as $\delta$ tends to zero. The quantitative characterizations in the paper mentioned before is helpful in this direction, since one can write the solution as $u_\delta=v_\delta +  w_\delta$ where $v_\delta$ is known and carries all the information regarding the blow-up, and $\nabla w_\delta$ remains uniformly bounded as $\delta$ tends to zero and can be computed numerically.

The study of the gradient blow-up has been recently extended to nonlinear cases. In \cite{GorbNovikov} the authors study perfectly conductivity problems involving the $p$-Laplacian, with $p>N$ (see also \cite{Gorb,Novikov}). Nonlinear conductivities of this type may be found in several applications, and we refer to \cite[Section 1]{GorbNovikov} for more details. The mathematical approach in \cite{GorbNovikov} is purely nonlinear and substantially differs from the ones adopted in the linear case. 

In our recent paper \cite{CiraoloSciammetta}, we studied anisotropic conductivities with anisotropy characterized by a norm 
$H: \xi \mapsto H(\xi)$ with $\xi \in \mathbb{R}^N$. More precisely, we considered the anisotropic perfectly conductivity problem 
\begin{equation*} 
\left\{
   \begin{array}{ll}
    \triangle^H u_{\delta} = 0& \hbox{in} \,\,\, \Omega_{\delta}, \\
 H(\nabla u_{\delta}) = 0 & \hbox{in} \,\,\, \overline{D}_{\delta}^i, \ i=1,2\,, \\ 
     \displaystyle\int_{\partial D_\delta^i} H\left(\nabla u_{\delta}\right)\nabla_{\xi}H\left(\nabla u_{\delta}\right)\cdot \nu ds=0 & i=1,2,\\
       u_{\delta}=\varphi(x) & \hbox{on} \,\,\, \partial \Omega \,,
    \end{array}
\right.
\end{equation*}
where $D_\delta^1$ and $D_\delta^2$ are two Wulff shapes of possibly different radii, $\Omega_\delta = \Omega \setminus (\overline{D^1_\delta \cup D^2_\delta})$,  and $\Delta^H_p$ denotes the Finsler $p-$Laplacian
$$
\Delta^H u_\delta = \text{div}\big(H\left(\nabla u_{\delta}\right) \nabla_{\xi} H\left(\nabla u_{\delta}\right) \big) \,.
$$
The main results in \cite{CiraoloSciammetta} are optimal estimates for the gradient blow-up. In accordance to the isotropic case, we showed that the rate of blow-up is $\delta^{-1/2}$ for $N=2$, it is $(\delta |\log \delta|)^{-1}$ for $N=3$  and $\delta^{-1}$ for $N \geq 4$, and we were able to detect the leading term (which is responsible of the blow-up) as $\delta$ tends to zero.

The purpose of this paper is to twofold: (i) we study the nonlinear conductivity problem for anisotropic $p$-Laplace type equations for any $1<p\leq N$, therefore in the Euclidean case we extend the results in \cite{GorbNovikov} to the case $1<p \leq N$; (ii) we deal with anisotropic conductivity problems, which may be of degenerate type.  

More precisely, we consider the problem
\begin{equation} \label{DpH}
\left\{
   \begin{array}{ll}
    \triangle^H_p u_{\delta} = 0& \hbox{in} \,\,\, \Omega_{\delta}, \\
 H(\nabla u_{\delta}) = 0 & \hbox{in} \,\,\, \overline{D}_{\delta}^i, \ i=1,2\,, \\ 
     \displaystyle\int_{\partial D_\delta^i} H^{p-1}\left(\nabla u_{\delta}\right)\nabla_{\xi}H\left(\nabla u_{\delta}\right)\cdot \nu ds=0 & i=1,2,\\
       u_{\delta}=\varphi(x) & \hbox{on} \,\,\, \partial \Omega \,,
    \end{array}
\right.
\end{equation}
where $D_\delta^1$ and $D_\delta^2$ are two Wulff shapes of possibly different radii $R_1$ and $R_2$, respectively, $\Omega_\delta = \Omega \setminus (\overline{D^1_\delta \cup D^2_\delta})$, $\nu$ is the outward normal to $\partial D_\delta^i$, and $\Delta^H_p$ denotes the Finsler $p-$Laplacian
$$
\Delta^H_p u_\delta = \text{div}\big(H^{p-1}\left(\nabla u_{\delta}\right) \nabla_{\xi} H\left(\nabla u_{\delta}\right) \big) \,,
$$
which has to be understood in the weak sense
$$
\int_{\Omega_\delta} H^{p-1}\left(\nabla u_{\delta}\right) \nabla_{\xi} H\left(\nabla u_{\delta}\right)  \cdot \nabla \phi \, dx = 0 \qquad \text{ for any } \phi \in C_0^1(\Omega_\delta) \,.
$$
Problem \eqref{DpH} can be seen as the Euler-Lagrange equation of the variational problem 
\begin{equation}\label{energia}
\min_{v \in W^{1,p}_{\varphi}(\Omega)} \left\{ \frac1p  \int_\Omega H(\nabla v)^p dx \, :  \ H(\nabla v)=0 \ \text{ in } D^i_\delta \,, \ i=1,2  \right\} \,,
\end{equation}
where
$$
W^{1,p}_\varphi (\Omega) = \left\{ v \in W^{1,p}(\Omega) \, :\ v = \phi \text{ on } \partial \Omega \right\} \,.
$$

We assume that the anisotropic distance of the inclusions from the boundary of $\Omega$ is uniformly bounded by below, i.e.
\begin{equation} \label{dist_B1B2_K}
  \text{dist}_{H_0}\left(\partial \Omega, D^1_{\delta} \cup D^2_{\delta} \right)\geq K,
\end{equation}
for some fixed $K>0$ and that the distance between the two inclusions is very small, i.e.
\begin{equation*}
  \text{dist}_{H_0}\left( D^1_{\delta},  D^2_{\delta}\right) = \delta,
\end{equation*}
for some $0 < \delta \leq \delta_0$. Here, $\text{dist}_{H_0}$ denotes the distance in the ambient norm $H_0$ (which is the dual norm of $H$).

We emphasize that the solution $u_{\delta}$ to \eqref{DpH} is constant on each particle $D_{\delta}^i$ with $i=1,2$, i.e.
\begin{equation}\label{potenziale}
  u_{\delta}=\mathcal{U}^i_{\delta} \quad  \text{ on } D^i_{\delta} \,,
\end{equation}
with $\mathcal{U}^i_{\delta} \in \mathbb{R}$, $i=1,2$, and the values $\mathcal{U}^1_{\delta}$ and  $\mathcal{U}^2_{\delta} $ are unknowns of the problem and have to be determined by solving the minimization problem \eqref{energia}. As we will show, the difference of potentials $\mathcal{U}^1_{\delta} - \mathcal{U}^2_{\delta} $ is responsible of the blow-up of the gradient of the solution as $\delta \to 0^+$.

We are going to describe the limit behavior of the solution in terms of the solution of the problem corresponding to $\delta=0$ (when the two inclusions touch each other), which is given by
\begin{equation} \label{PH0}
\left\{
   \begin{array}{ll}
    \triangle^H_p u_{0} = 0& \hbox{in} \,\,\, \Omega_{0}, \\
 H(\nabla u_{0}) = 0 & \hbox{in} \,\,\, \overline{D_{0}^i}, \,\, i=1,2, \\
     \displaystyle \sum_{i=1,2} \int_{\partial D_0^i} H^{p-1}\left(\nabla u_{0}\right)\nabla_{\xi}H\left(\nabla u_{0}\right)\cdot \nu ds=0  \,,& \\
       u_{0}=\varphi(x) & \hbox{on} \,\,\, \partial \Omega \,.
    \end{array}
\right.
\end{equation}
A remarkable point is the fact that the solution $u_\delta$ does not converge to $u_0$ in the whole $\Omega_0$ (it is not difficult to show that the gradient of $u_0$ is uniformly bounded). Instead, the convergence in $C^{1,\alpha}$-norm holds true in  compact sets of $\Omega_0$ not including the touching point between $D_0^1$ and $D_0^2$ (which are the two inclusions at the limit $\delta=0$). The behavior of $u_\delta$ close to the limit touching point is described in terms of the following quantity related to $u_0$:
\begin{equation} \label{R0_def}
\mathcal{R}_0 = \int_{\partial D_0^1} H^{p-1}\left(\nabla u_{0}\right)\nabla_{\xi}H\left(\nabla u_{0}\right)\cdot \nu ds \,.
\end{equation}
We emphasize that $\mathcal{R}_0$ is one of the two addends appearing in the third condition of \eqref{PH0}, and  we notice that the third condition in \eqref{PH0} is different from third condition in \eqref{DpH}, since in \eqref{PH0} it is required that the sum of the two integrals on $\partial D_0^1$ and $\partial D_0^2$ vanishes.

Without loss of generality, we  assume that the two inclusions $\partial D_\delta^1$ and $\partial D_\delta^2$ move along  
the $x_N$-axis as $\delta \to 0^+$. Since the inclusions are Wulff shapes, the limit-touching point on $\partial D_\delta^1$ is a point of the form $R_{1} \hat P$, where $\hat P=(0, \ldots, 0, t_0)  \in \partial B_{H_{0}}(0,1)$. The matrix $\nabla^2H_0(\hat P)$ is crucial to describe the blow-up. More precisely, we denote by $\QQ$ the matrix obtained  by considering the first $N-1$ rows and $N-1$ columns of $\nabla^2H_0(\hat P)$, i.e.
\begin{equation}\label{matriceQ}
\QQ=\left(
\begin{array}{cccc}
      \partial^2_{x_1x_1}H_0(\hat P) & \ldots & \partial^2_{x_1 x_{N-1}}H_0(\hat P) \\
         \vdots & \ddots & \vdots \\
      \partial^2_{x_{N-1}x_1}H_0(\hat P) & \ldots & \partial^2_{x_{N-1}x_{N-1}}H_0(\hat P) \\
    \end{array}
  \right) \,,
\end{equation}
(we shall use the variable $x$ for the ambient space $\mathbb{R}^N$ and $\xi$ for the dual space). We also recall the definition of \emph{anisotropic normal} $\nu_H$ at a point $x$, which is given by
$$
\nu_H (x) = \nabla_{\xi}H\left(\nu(x)\right) \,,
$$
where $\nu(x)$ denotes the outward Euclidean normal at $x$.
Our main result is the following.

\begin{theorem} \label{thm_main_1}
Let $u_{\delta}$ be the solution to \eqref{DpH} and let $\mathcal{R}_0$ be given by \eqref{R0_def}. For any fixed $\tau \in (0,1/2]$ we have
\begin{equation} \label{estimates_main_thm}
  (1-\tau) C_*\Phi_N(\delta)+o\left(\Phi_N(\delta)\right)\leq \|  H(\nabla u_{\delta}) \|^{p-1}_{L^{\infty}(\Omega_{\delta})} \leq (1+\tau) C_*\Phi_N(\delta)+o\left(\Phi_N(\delta)\right)
\end{equation}
as $\delta \to 0^+$, with
$$
\Phi_N(\delta)=\begin{cases}
\delta^{-\frac{N-1}{2}} & \dfrac{N+1}{2}<p \leq N \,, \\
\dfrac{1}{\delta^{p-1}|\ln\delta|} & p=\dfrac{N+1}{2}\,, \\
\dfrac{1}{\delta^{p-1}} & 1< p<\dfrac{N+1}{2} \,,
\end{cases}
$$
and
\begin{equation} \label{C*}
C_* = \left(\dfrac{R_1+R_2}{2R_1R_2}\right)^{\frac{N-1}{2}}|\mathcal{Q}|^{\frac{N-1}{2}}\mathcal{R}_0 C \,,
\end{equation}
where $Q$ is given by \eqref{matriceQ} and $C$ depends on $N$ and $\nu_{H}(\hat P) \cdot \nu(\hat P)$.
\end{theorem}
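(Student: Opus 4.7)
The plan is to follow the classical ``auxiliary function + flux identity + neck integration'' scheme, adapted to the degenerate anisotropic setting. First I would reduce the blow-up analysis to a sharp control of the potential drop $\mathcal{U}^1_\delta - \mathcal{U}^2_\delta$, by introducing an explicit auxiliary function $v_\delta$ that interpolates linearly in the $x_N$-direction across the narrow neck: writing the anisotropic caps of $\partial D^i_\delta$ as graphs $x_N = f_i(x')$ and setting $h(x') = f_2(x') - f_1(x')$, one defines
\begin{equation*}
v_\delta(x',x_N) = \mathcal{U}^1_\delta + (\mathcal{U}^2_\delta - \mathcal{U}^1_\delta)\frac{x_N - f_1(x')}{h(x')}
\end{equation*}
in a cylindrical neighborhood of the touching segment, and extends $v_\delta$ to $\Omega_\delta$ so as to match the appropriate boundary behaviour. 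Away from the touching point one should also establish $C^{1,\alpha}$-convergence $u_\delta \to u_0$, which can be derived from a uniform energy bound plus standard regularity for the Finsler $p$-Laplacian.

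The key geometric input is the Taylor expansion of $H_0$ at $\hat P$ combined with the Wulff-shape structure, which yields
\begin{equation*}
h(x') = \delta + \frac{1}{2}\left(\frac{1}{R_1} + \frac{1}{R_2}\right) \mathcal{Q} x' \cdot x' + O(|x'|^3),
\end{equation*}
with $\mathcal{Q}$ as in \eqref{matriceQ}. Testing the weak form of \eqref{DpH} with a function equal to $1$ on $D^1_\delta$ and compactly supported near it, and using that the flux through $\partial D^1_\delta$ converges to $\mathcal{R}_0$ (a consequence of the $C^{1,\alpha}$-convergence away from the touching point), I would establish the flux identity
\begin{equation*}
\mathcal{R}_0 \;=\; (\mathcal{U}^1_\delta - \mathcal{U}^2_\delta)^{p-1}\, \bigl(\nu_H(\hat P) \cdot \nu(\hat P)\bigr) \int_{B'_\rho} \frac{dx'}{h(x')^{p-1}}\,(1+o(1)),
\end{equation*}
where $B'_\rho \subset \mathbb{R}^{N-1}$ is a small fixed disk around the origin; the factor $\nu_H(\hat P) \cdot \nu(\hat P)$ records the anisotropic/Euclidean surface-measure comparison on $\partial D^1_\delta$.

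The three regimes in $\Phi_N(\delta)$ then emerge from the asymptotics of the neck integral $I_\delta = \int_{B'_\rho} h(x')^{-(p-1)}\,dx'$. Diagonalizing $\mathcal{Q}$ and rescaling $y = x'/\sqrt{\delta}$ one finds $I_\delta \sim c_N |\mathcal{Q}|^{-1/2}\bigl(\tfrac{2R_1R_2}{R_1+R_2}\bigr)^{(N-1)/2}\delta^{(N-1)/2-(p-1)}$ for $p > (N+1)/2$, $I_\delta \sim c_N |\mathcal{Q}|^{-1/2}|\ln\delta|$ for $p = (N+1)/2$, and $I_\delta = O(1)$ for $1 < p < (N+1)/2$. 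Solving the flux identity for $\mathcal{U}^1_\delta - \mathcal{U}^2_\delta$ and combining with the pointwise reading $H(\nabla u_\delta)(\hat P_\delta) \sim (\mathcal{U}^1_\delta - \mathcal{U}^2_\delta)/\delta$ at the closest-point pair produces the leading constant $C_*$ in \eqref{C*} and the claimed rate $\Phi_N(\delta)$, with the $\tau$-slack absorbing higher-order terms in the Taylor expansion of $H_0$ and in the approximation $u_\delta \approx v_\delta$.

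The upper estimate in \eqref{estimates_main_thm} is then obtained from the Dirichlet principle in \eqref{energia}, using $v_\delta$ as a competitor to bound $\int H(\nabla u_\delta)^p$ from above and then transferring to an $L^\infty$ bound via anisotropic $C^{1,\alpha}$-regularity after a blow-up/rescaling in the neck. The lower estimate is obtained by inserting suitable cutoff functions into the weak formulation to reverse the flux identity. The main technical obstacle is the degenerate/singular regime $1 < p \leq N$ (in particular $1 < p < 2$), where the Finsler $p$-Laplacian is not uniformly elliptic and classical Schauder estimates do not apply: the uniform $C^{1,\alpha}$-bounds along $\{u_\delta\}$ needed to convert integral bounds into pointwise ones must come from DiBenedetto-type regularity, with a careful verification that the constants do not degenerate as $\delta \to 0^+$ and that the localization around $\hat P$ is compatible with the anisotropic structure.
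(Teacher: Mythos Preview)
Your overall strategy---flux through $\partial D^1_\delta$, Taylor expansion of the gap $h(x')$, integration of $h^{-(p-1)}$ over the neck to extract the three regimes, and recovery of the potential drop---matches the paper's Section~5 closely. The main substantive differences, and the places where your sketch is genuinely incomplete, are the following two.

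\medskip
\textbf{Pointwise $L^\infty$ bounds with the sharp constant.} You assert the ``pointwise reading'' $H(\nabla u_\delta)(\hat P_\delta)\sim(\mathcal U^1_\delta-\mathcal U^2_\delta)/\delta$ and then propose to obtain the matching upper $L^\infty$ bound from the Dirichlet principle plus blow-up/rescaling and DiBenedetto-type regularity. That route will give the right \emph{rate} $\Phi_N(\delta)$ but not the constant $C_*$ to within the factor $(1\pm\tau)$ that the theorem claims: interior $C^{1,\alpha}$ estimates after rescaling only yield $H(\nabla u_\delta)\leq C(\mathcal U^1_\delta-\mathcal U^2_\delta)/h(x')$ with a generic $C$. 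The paper instead uses explicit radial barriers in anisotropic annuli (the solutions~\eqref{v_anello} to~\eqref{pb_v_anello}) at every point $P$ of the neck cap, obtaining the two-sided pointwise bound (Lemma~5.2)
\[
-\Big(\tfrac{\mathcal U^1_\delta-\mathcal U^2_\delta}{\rho_2-\rho_1}\Big)^{p-1}-C\ \leq\ H^{p-1}(\nabla u_\delta)\,\nabla_\xi H(\nabla u_\delta)\cdot\nu\ \leq\ -\Big(\tfrac{\mathcal U^1_\delta-\mathcal U^2_\delta}{r_2-r_1}\Big)^{p-1}+C,
\]
with $r_2-r_1,\ \rho_2-\rho_1=\delta+\tfrac{R_1+R_2}{2R_1R_2}\,\mathcal Q x'\cdot x'+o(\cdots)$. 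This serves simultaneously as the input to the flux identity (your $\int h^{-(p-1)}$) and, evaluated at the closest-point pair, as the sharp $L^\infty$ control. Your linear interpolant $v_\delta$ is a useful heuristic but is neither a sub- nor a supersolution for $\Delta^H_p$, so it cannot replace these barriers for the pointwise step.

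\medskip
\textbf{Uniform gradient bounds outside the neck.} You pass over this as ``uniform energy bound plus standard regularity''; the paper treats it as a central difficulty of the degenerate case (see Remark~\ref{remark_Pfunction}). The issue is that one needs a gradient bound on the lateral faces $\partial\mathcal N^\pm_\delta(w)$ of the neck, where comparison with annular barriers is unavailable. The paper handles this with a new $P$-function maximum principle (Theorem~\ref{MaximumPrinciple}): one shows that $P=f\,H(\nabla u_\delta)^2+\lambda u_\delta^2$, with $f$ a cutoff vanishing in the inner neck, satisfies $\mathcal L P\geq 0$ for $\lambda\sim w^{-2}$, whence $H(\nabla u_\delta)\leq C/w$ on $\overline{\Omega_\delta}\setminus\mathcal N_\delta(w)$. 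This is what makes the localization of the flux to the neck rigorous and guarantees that the $L^\infty$ maximum is actually captured there. Without it, your flux identity step (testing with a cutoff equal to $1$ on $D^1_\delta$) carries an uncontrolled boundary term on $\partial\mathcal N^\pm_\delta(w)$.
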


Theorem \ref{thm_main_1} gives an optimal quantitative description of the blow-up of the gradient for problem \eqref{DpH}.
Moreover, the estimates \eqref{estimates_main_thm} provide an \emph{almost} sharp characterization of the leading term in the blow-up. Indeed, $\tau$ may be chosen as small as desired which suggests that $u_\delta \sim  C_*\Phi_N(\delta)$ as $\delta \to 0^+$.

Compared to \cite{GorbNovikov} we deal with nonlinear problems of $p$-Laplace type for any  $1<p \leq N$. Theorem \ref{thm_main_1} is the natural extension to the anisotropic $p$-Laplace conductivity problems studied in \cite{CiraoloSciammetta}.
The set-up of the proof of Theorem \ref{thm_main_1} differs from the classical ones used in the linear cases and it is in the spirit of the ones adopted in \cite{CiraoloSciammetta} and \cite{GorbNovikov}. More precisely, we define a \emph{neck} of width $w>0$ (and sufficiently small) as the set
\begin{equation}\label{neck}
\mathcal{N}_{\delta}(w)=\{x=(x',x_N) \in \Omega_{\delta}\,\, \text{such that} \,\, |\QQ^{\frac{1}{2}}x'|<w, H_{0}(x)<\max(R_1,R_2) \},
\end{equation}
(see Figure \ref{fig_neck}) where $\QQ^{\frac{1}{2}}$ is the square root of the matrix $\QQ$ defined in \eqref{matriceQ}. We first show that  $\nabla u_\delta$ remains uniformly bounded outside the neck as $\delta \to 0^+$: this is achieved by using comparison principles and employing some maximum principles for a suitable $P$-function. Beyond the degeneracy of the operator, this is one of the major points where the extension to the degenerate case requires new tools (see Remark \ref{remark_Pfunction} below). Then we focus on what happens inside the neck, and we give sharp estimates on the difference of potential $\mathcal{U}_\delta^1-\mathcal{U}_\delta^2$ as $\delta \to 0^+$, which leads to \eqref{estimates_main_thm}.

\begin{center}
	\begin{figure}[h]
		\includegraphics[scale=0.5]{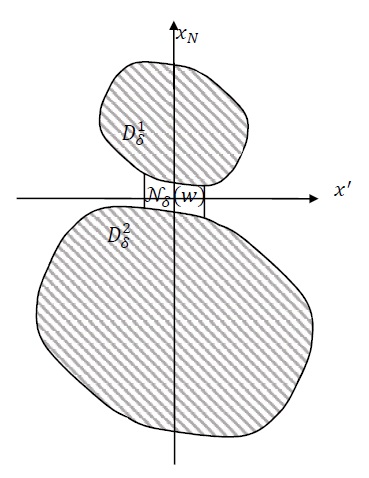}\label{fig_neck}
		\caption{Two anisotropic balls and the neck between them.}
	\end{figure}
\end{center}

The paper is organized as follows. In Section \ref{Basic notations and preliminary results} we give some preliminary results and set up the notation. In Section \ref{sect_max_princ} we prove some crucial maximum principles and in Section \ref{sect_unif_bounds} we give uniform estimates for the gradient in the region where it remains uniformly bounded. Section \ref{sect_thm1} is devoted to the proof of Theorem \ref{thm_main_1}.

\section{Preliminaries}\label{Basic notations and preliminary results}
\subsection{About norms in $\mathbb{R}^N$} In this subsection we recall some facts about norms in $\mathbb{R}^N$, $N \geq 2$. Let $H:\mathbb{R}^N \rightarrow \mathbb{R}$ be a norm, i.e.
\begin{itemize}
 \item[(i)]  $H$ {is convex,}    
 \item[(ii)]  $H(\xi)\geq 0 \,\, \text{for} \,\,\xi \in \mathbb{R}^N$ {and}   $H(\xi)=0$ {if and only if} $\xi=0$,   
 \item[(iii)] $H(t\xi)=|t|H(\xi)$  {for} $\xi \in \mathbb{R}^N$ {and} $t \in \mathbb{R}$. 
\end{itemize}

\noindent Since all norms in $\mathbb{R}^N$ are equivalent, there exist two positive constants $c_1,c_2$ such that
$$
c_1 |\xi| \leq H(\xi) \leq c_2 |\xi| \quad \text{ for any } \xi \in \mathbb{R}^N \,.
$$
In our notation, $H_0$ is a norm in the ambient space, and the dual space (still identified with $\mathbb{R}^N$) is equipped by the dual norm $H$, therefore
\begin{equation*} 
  H_0(x)=\sup_{\xi \neq 0} \dfrac{x \cdot \xi}{H(\xi)} \quad \text{for} \quad x\in \mathbb{R}^N \,;
\end{equation*}
and analogously
\begin{equation*} 
  H(\xi)=\sup_{x \neq 0} \dfrac{x \cdot \xi}{H_0(\xi)} \quad \text{for} \quad x \in \mathbb{R}^N \,.
\end{equation*}
Hence, according to this notation, the norm of the gradient of a function $u$ will be given by using $H$.

By assuming that $H$ is smooth enough outside the origin, the homogeneity of $H$ yields
\begin{equation} \label{nabla_H_zero_om}
  \nabla_{\xi}H(t \xi)= \text{sign}(t)  \nabla_{\xi}H(\xi), \quad \text{for} \quad \xi \neq 0 \quad \text{and} \quad t\neq 0,
\end{equation}
and\begin{equation}\label{condizionescalare}
 \nabla_{\xi}H(\xi) \cdot \xi  =H(\xi), \quad \text{for} \quad \xi \in \mathbb{R}^N,
\end{equation}
where the left hand side is taken to be $0$ when $\xi=0$. Moreover,
\begin{equation}\label{nabla2_H_omog}
  \nabla^2_{\xi}H(t\xi)=\dfrac{1}{|t|}\nabla^2_{\xi}H(\xi), \quad \text{for} \quad \xi \neq 0 \quad \text{and} \quad t\neq 0 \,,
\end{equation}
where $\nabla^2_{\xi}$ is the Hessian operator with respect to the $\xi$ variable; we also notice that
\begin{equation}\label{nabla2_H2_omog}
  \nabla^2_{\xi}H^2(t\xi)=\nabla^2_{\xi}H^2(\xi), \quad \text{for} \quad \xi \neq 0 \quad \text{and} \quad t\neq 0 \,.
\end{equation}
Hence, \eqref{condizionescalare} implies that
\begin{equation}\label{H=0}
   \partial^2_{\xi_{i}\xi_{k}}H (\xi) \xi_{i}=0,
\end{equation}
with $\xi \neq 0$ and for every $k=1, \ldots,N$. Moreover, by differentiating \eqref{H=0} we obtain that 
\begin{equation} \label{HH=0}
  \partial^3_{\xi_{i} \xi_j \xi_{k}}H (\xi) \xi_{i} + \partial^2_{\xi_{j}\xi_{k}}H (\xi) =0 
\end{equation}
for $\xi \neq 0$.

 The following properties hold provided that $H \in C^1\left(\mathbb{R}^N\setminus\{0\}\right)$ and the unitary ball $\{\xi \in \mathbb{R}^n:\ H(\xi) < 1 \}$ is strictly convex (see \cite[Lemma 3.1]{CianchiSalani}):
\begin{equation}\label{LemmaCianchiSalani1}
 H_0\left(\nabla_{\xi}H(\xi)\right)=1, \quad \text{for} \quad \xi \in \mathbb{R}^N\setminus\{0\},
\end{equation}
and
\begin{equation}\label{LemmaCianchiSalani11}
  H\left(\nabla H_0(x)\right)=1, \quad \text{for} \quad x \in \mathbb{R}^N\setminus\{0\} \,.
\end{equation}

For $\xi_0 \in \mathbb{R}^N$ and $r>0$, the ball of center $\xi_0$ and radius $r$ in the norm $H$ is denoted by
\begin{equation*}
B_H(\xi_0,r)=\{\xi \in \mathbb{R}^N :H(\xi-\xi_0)<r\};
\end{equation*}
analgously,
\begin{equation*}
  B_{H_{0}}(x_0,r)=\{x \in \mathbb{R}^N :H_0(x-x_0)<r\}
\end{equation*}
denotes the ball of center $x_0$ and radius $r$ in the norm $H_0$. A ball in the norm $H_0$ is called the \emph{Wulff shape} of $H$.

Let $\BHO(r)$ and $\BHO(R)$ be two Wulff shapes centered at the origin, with $r<R$. It will be useful to have at hand the explicit solution to the problem
\begin{equation} \label{pb_v_anello}
\left\{
   \begin{array}{ll}
    \Delta^H_p v = 0& \hbox{in} \,\,\, \BHO(R) \setminus \overline{\BHO(r)}, \\
       v=C_r & \hbox{on} \,\,\, \partial \BHO(r),\\
v=C_R & \hbox{on} \,\,\, \partial \BHO(R),
    \end{array}
\right.
\end{equation}
which is given by
\begin{equation} \label{v_anello}
v(x)=\left\{
   \begin{array}{ll}
    (C_r-C_R)\dfrac{H_0(x)^{\frac{p-N}{p-1}}-R^{\frac{p-N}{p-1}}}{r^{\frac{p-N}{p-1}}-R^{\frac{p-N}{p-1}}}+C_R& \hbox{if} \,\,\, 1<p<N, \\
\\
      (C_r-C_R)\dfrac{\ln \left(R^{-1} H_0(x)\right)}{\ln \left(R^{-1}r \right)}+C_R& \hbox{if} \,\,\, N=p, \\
    \end{array}
\right.
\end{equation}
for any $x \in \overline{\BHO(R)} \setminus \BHO(r)$.

\subsection{Existence and uniqueness}
As mentioned in the Introduction, we consider the perfectly conductivity problem \eqref{DpH}, which is the Euler-Lagrange equation for the variational problem \eqref{energia}.  It is well-known that $u \in C^{1,\alpha}(\Omega_\delta)$ (see \cite{Dibenedetto}). In the following we prove the existence and uniqueness of solution.

\begin{theorem}
  There exists at most one solution $u \in H^1(\Omega_\delta) \cap C^{1,\alpha}(\overline{\Omega}_{\delta})$ of problem \eqref{DpH}.
\end{theorem}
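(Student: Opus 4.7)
The plan is to deduce uniqueness from the variational characterization \eqref{energia} together with strict convexity of the functional $v \mapsto \tfrac{1}{p}\int_\Omega H(\nabla v)^p\,dx$. Since $H$ is a norm with strictly convex unit ball, $\xi \mapsto H(\xi)^p$ is strictly convex on $\mathbb{R}^N$ for $p>1$; equivalently, the vector field $A(\xi):=H^{p-1}(\xi)\nabla_\xi H(\xi)=\tfrac{1}{p}\nabla_\xi H^p(\xi)$ is strictly monotone, namely $(A(\xi_1)-A(\xi_2))\cdot(\xi_1-\xi_2)>0$ whenever $\xi_1\neq\xi_2$.

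Suppose $u_1,u_2 \in H^1(\Omega_\delta)\cap C^{1,\alpha}(\overline{\Omega}_\delta)$ are two solutions of \eqref{DpH}. By \eqref{potenziale}, each is a constant $\mathcal{U}^i_j$ on the inclusion $D^i_\delta$, so the difference $\phi:=u_1-u_2$ vanishes on $\partial\Omega$ and is constant on each $\overline{D^i_\delta}$. I would first extend the weak formulation stated after \eqref{DpH} from $C^1_0(\Omega_\delta)$-test functions to the class of $W^{1,p}(\Omega)$-functions vanishing on $\partial\Omega$ and constant on each inclusion; this is precisely the class of admissible variations for the constrained problem \eqref{energia}. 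For such a variation $\phi$, integration by parts on $\Omega_\delta$ combined with $\triangle^H_p u_j=0$ gives
\begin{equation*}
\int_{\Omega_\delta} A(\nabla u_j)\cdot\nabla\phi\,dx \;=\; -\sum_{i=1,2} \phi|_{\partial D^i_\delta}\,\int_{\partial D^i_\delta} H^{p-1}(\nabla u_j)\nabla_\xi H(\nabla u_j)\cdot\nu\,ds,
\end{equation*}
and the right-hand side vanishes by the third condition in \eqref{DpH}, for $j=1,2$.

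Taking $\phi=u_1-u_2$ (which is admissible by the observation above) in both identities and subtracting yields
\begin{equation*}
\int_{\Omega_\delta}\bigl[A(\nabla u_1)-A(\nabla u_2)\bigr]\cdot\nabla(u_1-u_2)\,dx=0.
\end{equation*}
Strict monotonicity of $A$ forces $\nabla u_1=\nabla u_2$ a.e.\ in $\Omega_\delta$; since $\Omega_\delta$ is connected and $u_1=u_2=\varphi$ on $\partial\Omega$, this gives $u_1\equiv u_2$ in $\Omega_\delta$. Continuity then equates the boundary values on $\partial D^i_\delta$, whence $\mathcal{U}^i_1=\mathcal{U}^i_2$ and $u_1\equiv u_2$ on each $\overline{D^i_\delta}$.

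The main obstacle is the justification of $u_1-u_2$ as test function: one must verify that the weak formulation is stable under enlarging the test-function class to variations that are constant (not zero) on each $\overline{D^i_\delta}$, and check that the divergence theorem reduces the associated boundary contributions precisely to the flux conditions in \eqref{DpH}. Once this is in place, the strict monotonicity argument is standard; a minor subtlety is the degeneracy of $A$ at the origin when $p\neq 2$, but the identity $A(\xi)\cdot\xi=H(\xi)^p$ together with strict convexity of $H^p$ provides the quantitative monotonicity needed to conclude.
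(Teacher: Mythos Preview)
Your proof is correct and follows essentially the same route as the paper: test the equation for each $u_j$ with $\phi=u_1-u_2$, use the flux condition on $\partial D^i_\delta$ and the Dirichlet condition on $\partial\Omega$ to kill the boundary terms, subtract, and invoke monotonicity of $\xi\mapsto H^{p-1}(\xi)\nabla_\xi H(\xi)$ to conclude $\nabla u_1=\nabla u_2$. You are somewhat more explicit than the paper about why $u_1-u_2$ is an admissible test function (it is constant, not zero, on each $\overline{D^i_\delta}$), which is a welcome clarification.
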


\begin{proof}
  Let  $u_1$, $u_2 \in H^1(\Omega_\delta)$ be two solutions of \eqref{DpH}. By multiplying the first equation of \eqref{DpH} by $u_1-u_2$ and integrating by parts, for $j \in \{1,2\}$, we have

\begin{eqnarray*}
  0 &=& \displaystyle\int_{\Omega_\delta} H^{p-1}\left(\nabla u_j\right)\nabla_{\xi}H\left(\nabla u_j\right)\cdot \nabla (u_1-u_2) dx - \displaystyle\int_{\partial \Omega} H^{p-1}(\nabla u_j)\nabla_{\xi} H(\nabla u_j) (u_1-u_2) \cdot \nu ds \nonumber \\
& & \quad +  \sum_{i=1}^2 \displaystyle\int_{\partial D_\delta^i} H^{p-1}(\nabla u_j)\nabla_{\xi} H(\nabla u_j) (u_1-u_2) \cdot \nu ds\nonumber  \\
&=&  \displaystyle\int_{\Omega_\delta} H^{p-1}\left(\nabla u_j\right)\nabla_{\xi}H\left(\nabla u_j\right)\cdot \nabla (u_1-u_2) dx  \,,\nonumber  \\
\end{eqnarray*}
where in the last equality we used the fourth condition in \eqref{DpH} and the fact that $u_1=u_2$ on $\partial \Omega$.
Thus, by the strong convexity of $H$, we have
\begin{equation*}
  0=\displaystyle\int_{\Omega_\delta} \left( H^{p-1}\left(\nabla u_1\right)\nabla_{\xi}H\left(\nabla u_1\right)-H^{p-1}\left(\nabla u_2\right)\nabla_{\xi}H\left(\nabla u_2\right)\right)\cdot \nabla (u_1-u_2) dx \geq \lambda \displaystyle\int_{\Omega_\delta} \left|\nabla (u_1-u_2)\right|^p dx \geq 0.
\end{equation*}
Thus $\nabla u_1= \nabla u_2$ in $\Omega_\delta$ and, since $u_1=u_2$ on $\partial D_\delta^i$, we have $u_1=u_2$ in $\Omega_\delta$.
\end{proof}

We define the energy functional
\begin{equation*}
  I_{\infty}[u]=\dfrac{1}{p} \displaystyle\int_{\Omega_{\delta}} H^p\left(\nabla u\right) dx,
\end{equation*}
where $u$ belongs to the set
\begin{equation*}
  \mathcal{A} := \left\{u \in W^{1,p}_{\varphi}(\Omega) : H\left(\nabla u\right) =0 \,\, \text{on} \,\, \overline{D_\delta^1 \cup D_\delta^2}\right\}.
\end{equation*}

\begin{theorem}
There exists a minimizer $u \in\mathcal{A}$ satisfying
\begin{equation*}
   I_{\infty}[u]=\min_{v \in \mathcal{A}} I_{\infty}[v].
\end{equation*}
 Moreover, $u \in W^{1,p}(\Omega_\delta) \cap C^{1,\alpha}(\overline{\Omega}_{\delta})$ is a solution to \eqref{DpH}.
\end{theorem}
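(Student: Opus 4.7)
The plan is to apply the direct method of the calculus of variations, followed by a first-variation argument to recover the Euler--Lagrange system \eqref{DpH}.

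First I would check that $\mathcal{A}$ is nonempty. Starting from a $W^{1,p}$-extension $\tilde\varphi$ of $\varphi$ to $\Omega$ (which exists by standard extension theorems since $\varphi \in C^0(\partial\Omega)$ can be approximated/modified to a smooth extension inside), one glues in the constants on each inclusion by means of cutoff functions $\eta_i \in C_0^\infty(\mathbb{R}^N)$ with $\eta_i \equiv 1$ on $\overline{D^i_\delta}$ and $\eta_i \equiv 0$ outside a slightly larger neighborhood (still far from $\partial\Omega$), and constructs $\tilde u = (1-\eta_1-\eta_2)\tilde\varphi + c_1\eta_1 + c_2\eta_2$ with arbitrary constants $c_i$. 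Then $\tilde u \in W^{1,p}_\varphi(\Omega)$ and $\nabla \tilde u \equiv 0$ on $\overline{D^i_\delta}$, hence $H(\nabla\tilde u)=0$ there, so $\tilde u \in \mathcal{A}$.

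Next I would run the standard three-step argument. \emph{Coercivity}: since $c_1|\xi|\le H(\xi)\le c_2|\xi|$, one has $I_\infty[u]\ge \tfrac{c_1^p}{p}\|\nabla u\|_{L^p(\Omega_\delta)}^p$; splitting $u = \tilde\varphi + v$ with $v\in W^{1,p}_0(\Omega)$ and using Poincar\'e's inequality yields a bound on $\|u\|_{W^{1,p}(\Omega)}$ along a minimizing sequence $\{u_n\}\subset \mathcal{A}$. \emph{Weak lower semicontinuity}: the integrand $\xi\mapsto H(\xi)^p$ is convex (since $H$ is a convex norm and $t\mapsto t^p$ is convex nondecreasing on $[0,\infty)$), so $I_\infty$ is sequentially weakly lower semicontinuous on $W^{1,p}$ by the classical Tonelli--Serrin theorem. \emph{Weak closedness of} $\mathcal{A}$: the boundary condition $u=\varphi$ on $\partial\Omega$ passes to weak limits by continuity of the trace; the constraint $H(\nabla u)=0$ on $\overline{D^i_\delta}$ is equivalent (since $H$ is a norm) to $\nabla u = 0$ a.e.\ on $D^i_\delta$, which defines a closed affine subspace and is hence weakly closed. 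Passing to a weakly convergent subsequence $u_n \rightharpoonup u$ in $W^{1,p}(\Omega)$ and combining the three facts yields a minimizer $u\in\mathcal{A}$. The regularity $u\in C^{1,\alpha}(\overline{\Omega}_\delta)$ in $\Omega_\delta$ then follows from DiBenedetto's regularity theory for degenerate quasilinear equations, once we know $u$ satisfies $\Delta^H_p u = 0$ there.

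Finally I would recover \eqref{DpH} from the Euler--Lagrange equation. For any $\phi \in C_0^\infty(\Omega_\delta)$ the perturbation $u+t\phi$ lies in $\mathcal{A}$ for all $t\in\mathbb{R}$, so differentiating $I_\infty[u+t\phi]$ at $t=0$ gives the weak form of $\Delta^H_p u = 0$ in $\Omega_\delta$. The Neumann-type integral conditions require admissible variations that change the (still-unknown) constant values on each $D^i_\delta$: take $\psi \in W^{1,p}(\Omega)$ vanishing on $\partial\Omega$ with $\psi \equiv c_i$ on $\overline{D^i_\delta}$ (constructed as in the first step); then $u+t\psi \in \mathcal{A}$ and stationarity yields
\[
\int_{\Omega_\delta} H^{p-1}(\nabla u)\nabla_\xi H(\nabla u)\cdot \nabla\psi\,dx = 0.
\]
Integrating by parts on $\Omega_\delta$ using $\Delta^H_p u=0$ and $\psi|_{\partial\Omega}=0$, only the boundary terms on $\partial D^i_\delta$ survive, producing $\sum_{i=1,2} c_i \int_{\partial D^i_\delta} H^{p-1}(\nabla u)\nabla_\xi H(\nabla u)\cdot\nu\,ds = 0$; independence of $c_1$ and $c_2$ then yields the third condition in \eqref{DpH} for each $i$.

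The main subtlety I expect to be the construction and analysis of the second family of variations: one has to verify that the cutoff-based $\psi$ really lies in the admissible perturbation space (i.e.\ that $\nabla\psi\equiv 0$ on the inclusions, so that $u+t\psi\in\mathcal{A}$) and then carry out the integration by parts carefully, keeping track of the sign convention for $\nu$ as the outward normal to $\partial D^i_\delta$ versus the outward normal to $\Omega_\delta$. The remaining pieces --- existence of an admissible function, coercivity, convexity-based lower semicontinuity, and the interior Euler--Lagrange equation --- are standard once the framework is in place.
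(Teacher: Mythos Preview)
Your proposal is correct and follows essentially the same route as the paper: the authors dispatch existence, the interior equation $\Delta^H_p u=0$, and $C^{1,\alpha}$ regularity in one line as ``standard methods in the calculus of variations,'' and then derive the flux conditions by testing against a $\phi\in C_0^\infty(\Omega)$ with $\phi\equiv 1$ on $\partial D^i_\delta$ and $\phi\equiv 0$ on $\partial D^j_\delta$, which is exactly your $\psi$ with $(c_1,c_2)=(1,0)$ and $(0,1)$. Your write-up is simply more explicit about the direct-method ingredients (nonemptiness of $\mathcal{A}$, coercivity, weak lower semicontinuity, weak closedness) that the paper elides.
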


\begin{proof}
The existence of the minimizer and the Euler Lagrange equation $\Delta_p^H u = 0$ follow from standard methods in the calculus of variations.
 The only thing which we need to show is the fourth equation in \eqref{DpH}. Let $i , j=1,2$ be fixed and let $\phi \in C^{\infty}_0(\Omega)$ be such that
\begin{equation*}
  \phi = \left\{
              \begin{array}{lllll}
                1, & \hbox{on}& \partial D_\delta^i, & &\\
                0, & \hbox{on}& \partial D_\delta^j, & \hbox{for}& j\neq i.
              \end{array}
            \right.
\end{equation*}
Since $u$ is a minimizer, by integrating by parts we obtain
\begin{eqnarray*}
  0 &=& - \displaystyle\int_{\Omega_\delta} \text{div}\left(H^{p-1}(\nabla u)\nabla_{\xi} H(\nabla u)\right) \phi \, dx \nonumber \\
 &=& \displaystyle\int_{\Omega_\delta} H^{p-1}\left(\nabla u\right)\nabla_{\xi}H\left(\nabla u\right)\cdot\nabla \phi \, dx - \displaystyle\int_{\partial \Omega} H^{p-1}(\nabla u)\nabla_{\xi} H(\nabla u) \phi \cdot \nu ds \nonumber \\
& & \quad + \sum_{j=1}^2 \displaystyle\int_{\partial D_\delta^j} H^{p-1}(\nabla u)\nabla_{\xi} H(\nabla u) \phi \cdot \nu ds\nonumber  \\
&=&  \displaystyle\int_{\partial D_\delta^i} H^{p-1}\left(\nabla u\right)\nabla_{\xi}H\left(\nabla u\right)\cdot \nabla \phi \,dx \nonumber  \\
\end{eqnarray*}
and we conclude.
\end{proof}

\section{Maximum principles} \label{sect_max_princ}
In this section we prove some maximum principles for $u_\delta$, $H(\nabla u)$ and for a $P$-function which is suitable for our purposes.

  We first recall that the Finsler $p-$Laplacian fulfills the maximum and comparison principles (see for instance \cite[Lemma 2.3]{BC}). In the following lemma, we show that the maximum and minimum of $u_\delta$ are attained at the boundary of $\Omega$ (and not on $\partial D_\delta^i$, $i=1,2$).
\begin{lemma} \label{lemma_maxmin}
Let $u_{\delta}$ the solution to problem \eqref{DpH}. The maximum and the minimum of $u_{\delta}$ are attained on $\partial \Omega$. In particular, we have that
$$
\max_{\overline{\Omega}_\delta} |u_\delta| =  \max_{\partial \Omega} |\varphi| \,.
$$
\end{lemma}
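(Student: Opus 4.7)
The plan is to combine the strong maximum principle and Hopf's lemma for the Finsler $p$-Laplacian with the zero-flux condition on each $\partial D_\delta^i$ in order to exclude that the extrema of $u_\delta$ are attained on the inclusion boundaries.

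First I would apply the weak maximum principle for $\Delta_p^H$ on $\Omega_\delta$, as recalled at the beginning of this section, to conclude that the max and min of $u_\delta$ are attained on $\partial \Omega_\delta = \partial\Omega \cup \partial D_\delta^1 \cup \partial D_\delta^2$. Since $u_\delta \equiv \mathcal{U}_\delta^i$ on $\partial D_\delta^i$, the problem reduces to ruling out $\max_{\overline{\Omega}_\delta} u_\delta = \mathcal{U}_\delta^i > \max_{\partial\Omega}\varphi$ (and, symmetrically, the analogous statement for the minimum).

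Assume by contradiction such a strict inequality for some $i \in \{1,2\}$. Then the strong maximum principle yields $u_\delta < \mathcal{U}_\delta^i$ throughout $\Omega_\delta$, while $u_\delta = \mathcal{U}_\delta^i$ on $\partial D_\delta^i$. Since $u_\delta \in C^{1,\alpha}(\overline{\Omega}_\delta)$ and is constant on $\partial D_\delta^i$, the gradient on $\partial D_\delta^i$ is purely normal, namely $\nabla u_\delta = (\partial_\nu u_\delta)\,\nu$ with $\nu$ the outward normal to $D_\delta^i$; moreover $\partial_\nu u_\delta \le 0$ along $\partial D_\delta^i$. At a point $x_0 \in \partial D_\delta^i$ where the interior maximum is touched, Hopf's lemma for the Finsler $p$-Laplacian (a direct adaptation of the Vázquez-type strong maximum/Hopf principle to the operator $\Delta_p^H$, valid for $1<p\le N$) delivers the strict inequality $\partial_\nu u_\delta(x_0) < 0$, which by continuity persists on a relatively open neighborhood of $x_0$ in $\partial D_\delta^i$.

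Next I would compute the integrand in the third condition of \eqref{DpH} on $\partial D_\delta^i$. Writing $\nabla u_\delta = t\,\nu$ with $t = \partial_\nu u_\delta \le 0$, the homogeneity relations \eqref{nabla_H_zero_om} and the Euler identity \eqref{condizionescalare} give
\begin{equation*}
H^{p-1}(\nabla u_\delta)\,\nabla_\xi H(\nabla u_\delta)\cdot\nu
= |t|^{p-1} H(\nu)^{p-1}\,\mathrm{sign}(t)\,\nabla_\xi H(\nu)\cdot\nu
= \mathrm{sign}(t)\,|t|^{p-1} H(\nu)^{p}.
\end{equation*}
This quantity is $\le 0$ everywhere on $\partial D_\delta^i$ and is strictly negative on the neighborhood of $x_0$ provided by Hopf's lemma, so its integral over $\partial D_\delta^i$ is strictly negative, contradicting the vanishing flux condition in \eqref{DpH}. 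Hence $\mathcal{U}_\delta^i \le \max_{\partial\Omega}\varphi$. The fully symmetric argument applied to $-u_\delta$ (which satisfies the analogous problem with boundary datum $-\varphi$) gives $\mathcal{U}_\delta^i \ge \min_{\partial\Omega}\varphi$, and the equality $\max_{\overline{\Omega}_\delta}|u_\delta| = \max_{\partial\Omega}|\varphi|$ follows at once.

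The main obstacle I expect is the careful justification of Hopf's lemma at $\partial D_\delta^i$ for the degenerate quasilinear operator $\Delta_p^H$, since classical Hopf applies to uniformly elliptic operators; here one needs to invoke (or briefly reprove) the Vázquez-type Hopf boundary point lemma in the anisotropic $p$-Laplace setting, together with the $C^{1,\alpha}$ regularity of $u_\delta$ up to the smooth boundary $\partial D_\delta^i$. The remaining bookkeeping (homogeneity signs, use of \eqref{condizionescalare}) is routine once the Hopf estimate is in hand.
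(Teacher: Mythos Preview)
Your proposal is correct and follows essentially the same approach as the paper: apply the maximum principle to reduce to $\partial\Omega_\delta$, then use Hopf's boundary point lemma on $\partial D_\delta^i$ to force a sign on the flux integrand, contradicting the third condition in \eqref{DpH}. Your write-up is in fact more careful than the paper's, since you spell out the sign computation of $H^{p-1}(\nabla u_\delta)\nabla_\xi H(\nabla u_\delta)\cdot\nu$ via \eqref{nabla_H_zero_om} and \eqref{condizionescalare} and flag the need for a Vázquez-type Hopf lemma in the degenerate anisotropic setting, both of which the paper leaves implicit.
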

\begin{proof}
From the maximum principle (see for instance \cite[Lemma 2.3]{BC}) we have that $|u_\delta|$ attains its maximum on $\partial \Omega_\delta$. By contradiction, let assume that $\max u_{\delta}=\mathcal{U}_\delta^1$. From Hopf's boundary point lemma we have that $|\nabla u_{\delta}|>0$ on $\partial D_\delta^1$, which contradicts the third condition of \eqref{DpH}. Analogously, the maximum can not be attained at $\partial D_\delta^2$, which implies the assertion.
\end{proof}
Before giving other maximum principles, we set some notation and prove some basic inequalities for the Finsler $p-$Laplacian.
In order to avoid heavy formulas, we use the following notation:
$$
u_i=\dfrac{\partial }{\partial x_i}u(x) \,,\quad  u_{ij}=\dfrac{\partial^2 }{\partial x_i \partial x_j} u(x) \,,\quad
  \partial_{\xi_i}H=\dfrac{\partial }{\partial \xi_i} H(\xi), \quad \partial^2_{\xi_i \xi_j}H=\dfrac{\partial^2 }{\partial \xi_i \partial \xi_j} H(\xi)$$
(we recall that we are going to use the variable $x \in \mathbb{R}^N$ for the ambient space and the variable $\xi \in \mathbb{R}^N$ for the dual space). Since
\begin{equation*}
  \text{div} \left(H^{p-1}(\nabla u)\nabla_{\xi} H(\nabla u)\right) = \left[(p-1) H^{p-2}(\nabla u)\partial_{\xi_{i}}H(\nabla u)\partial_{\xi_{j}}H(\nabla u)+ H^{p-1}(\nabla u)\partial_{\xi_{i} \xi_{j}}^2H(\nabla u)\right]u_{ij}
\end{equation*}
at points where $\nabla u \neq 0$, by setting
\begin{equation}\label{aij}
  a_{ij}:= (p-1) H^{p-2}(\nabla u)\partial_{\xi_{i}}H(\nabla u)\partial_{\xi_{j}}H(\nabla u)+ H^{p-1}(\nabla u)\partial_{\xi_{i} \xi_{j}}^2H(\nabla u)= \dfrac{1}{p} \partial_{\xi_{i} \xi_{j}}^2 H^p(\nabla u) \,,
\end{equation}
the Finsler $p-$Laplacian can be written as
\begin{equation}\label{finsleraijuj}
  \Delta^H_p u= a_{ij}u_{ij} 
\end{equation}
at points where $\nabla u \neq 0$. 

In the rest of this section, we shall give some maximum principles involving the second order elliptic operator $\mathcal L$ defined by
\begin{equation}\label{OperatoreL}
  \mathcal{L} v := \partial_i (a_{ij} v_j)  \,.
\end{equation}
\begin{lemma} \label{lemma_maxprinc1}
Let $u$ satisfy $\Delta_p^H u = 0$ in some domain $E$, and assume that $\nabla u \neq 0$. Then
\begin{equation}\label{P_2}
  \mathcal{L}u^2=2(p-1)H^{p}(\nabla u) \,.
\end{equation}
\end{lemma}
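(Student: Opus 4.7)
The plan is a short direct computation exploiting (i) the divergence structure of $\mathcal{L}$, (ii) the fact that $u$ solves $\Delta_p^H u=0$, and (iii) the Euler-type homogeneity identities for $H$ and $H^p$. The hypothesis $\nabla u\neq 0$ ensures that all the second derivatives $\partial^2_{\xi_i\xi_j}H^p(\nabla u)$ in the definition of $a_{ij}$ are well-defined.

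First I would apply the Leibniz rule to the divergence form operator:
\begin{equation*}
\mathcal{L}u^2=\partial_i\bigl(a_{ij}(u^2)_j\bigr)=2\,\partial_i(u\, a_{ij}u_j)=2u_i\,a_{ij}u_j+2u\,\partial_i(a_{ij}u_j).
\end{equation*}
The task then reduces to identifying the two terms on the right-hand side.

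For the first term, I use that $H^p$ is $p$-homogeneous, so $\nabla_\xi H^p$ is $(p-1)$-homogeneous. Euler's identity applied to $\nabla_\xi H^p$ yields $\partial^2_{\xi_i\xi_j}H^p(\xi)\,\xi_j=(p-1)\partial_{\xi_i}H^p(\xi)$, hence by the definition \eqref{aij} of $a_{ij}$,
\begin{equation*}
a_{ij}u_j=\tfrac{1}{p}\partial^2_{\xi_i\xi_j}H^p(\nabla u)\,u_j=\tfrac{p-1}{p}\,\partial_{\xi_i}H^p(\nabla u)=(p-1)\,H^{p-1}(\nabla u)\,\partial_{\xi_i}H(\nabla u).
\end{equation*}
Contracting with $u_i$ and invoking the Euler identity \eqref{condizionescalare}, $\nabla_\xi H(\nabla u)\cdot \nabla u=H(\nabla u)$, gives
\begin{equation*}
u_i\,a_{ij}u_j=(p-1)\,H^{p-1}(\nabla u)\,\nabla u\cdot\nabla_\xi H(\nabla u)=(p-1)\,H^p(\nabla u).
\end{equation*}

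For the second term, I use the same identity $a_{ij}u_j=(p-1)H^{p-1}(\nabla u)\partial_{\xi_i}H(\nabla u)$ to recognize
\begin{equation*}
\partial_i(a_{ij}u_j)=(p-1)\,\partial_i\bigl(H^{p-1}(\nabla u)\,\partial_{\xi_i}H(\nabla u)\bigr)=(p-1)\,\Delta^H_p u=0,
\end{equation*}
by hypothesis. Substituting back into the expansion of $\mathcal{L}u^2$ yields exactly \eqref{P_2}. The computation is entirely algebraic once the homogeneity identities are in hand, so there is no genuine obstacle; the only thing to keep in mind is that everything is performed pointwise at points where $\nabla u\neq 0$, matching the hypothesis of the lemma.
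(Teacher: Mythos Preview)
Your proof is correct. It reaches the same conclusion as the paper but by a slightly different and more economical route. The paper expands $\mathcal{L}u^2$ fully into three terms,
\[
\mathcal{L}u^2 = 2a_{ij}u_iu_j + 2u\,a_{ij}u_{ij} + 2u\,(\partial_i a_{ij})u_j,
\]
and then handles $a_{ij}u_{ij}=\Delta_p^H u=0$ and $(\partial_i a_{ij})u_j=0$ separately, the latter requiring an explicit computation involving third derivatives of $H$ and the identities \eqref{H=0}--\eqref{HH=0}. You instead keep $a_{ij}u_j$ grouped and use the Euler identity for the $(p-1)$-homogeneous function $\nabla_\xi H^p$ to recognize $a_{ij}u_j=(p-1)H^{p-1}(\nabla u)\partial_{\xi_i}H(\nabla u)$ directly; taking the divergence then immediately gives $(p-1)\Delta_p^H u=0$, bypassing the third-derivative calculation entirely. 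Your approach is shorter and more transparent; the paper's expansion has the minor side benefit of isolating the identity $(\partial_i a_{ij})u_j=(p-2)a_{ik}u_{ik}$, which is of independent interest but not needed here.
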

\begin{proof}
From \eqref{condizionescalare}, \eqref{H=0} and \eqref{HH=0} we have
\begin{eqnarray*}
\partial_i(a_{ij}) u_j&=&  (p-1)(p-2)H^{p-3}(\nabla u)\partial_{\xi_k}H(\nabla u)\partial_{\xi_i}H(\nabla u) \underbrace{\partial_{\xi_j}H(\nabla u) u_j}_{=H(\nabla u)}u_{ik}\\
&&\quad +(p-1)H^{p-2}(\nabla u)\partial^2_{\xi_i\xi_k}H(\nabla u) \underbrace{\partial_{\xi_j}H(\nabla u) u_j}_{=H(\nabla u)}u_{ik}\\
&&\quad +(p-1)H^{p-2}(\nabla u)\partial_{\xi_i}H(\nabla u)\underbrace{\partial^2_{\xi_j\xi_k}H(\nabla u) u_j}_{=0}u_{ik}\\
&&\quad +(p-1)H^{p-2}(\nabla u)\partial_{\xi_k}H(\nabla u)\underbrace{\partial^2_{\xi_i\xi_j}H(\nabla u) u_j}_{=0}u_{ik}+H^{p-1}(\nabla u)\underbrace{\partial^3_{\xi_i \xi_j \xi_k}H(\nabla u)u_j}_{=-\partial^2_{\xi_i \xi_k}H(\nabla u)}u_{ik} \,,
\end{eqnarray*}
i.e.
\begin{equation*}
\partial_i(a_{ij}) u_j  = (p-2) \left[(p-1) H^{p-2}(\nabla u)\partial_{\xi_k}H(\nabla u)\partial_{\xi_i}H(\nabla u) 
+  H^{p-1}(\nabla u)\partial^2_{\xi_i\xi_k}H(\nabla u) \right]  u_{ik}
\end{equation*}
and from \eqref{aij} we obtain that
\begin{equation} \label{fine}
\partial_i(a_{ij}) u_j  =(p-2) a_{ik}u_{ik} = 0\,,
\end{equation}
where the last equality follows from \eqref{finsleraijuj}.
Since 
$$
\text{div} \left(a_{ij} \nabla u^2\right) = 2a_{ij} u_i u_j + 2u a_{ij} u_{ij} +2u \partial_i(a_{ij}) u_j \,,
$$
from \eqref{finsleraijuj} and  \eqref{fine} we have
$$
\text{div} \left(a_{ij} \nabla u^2\right) = 2a_{ij} u_i u_j  
$$
and  \eqref{aij} yields 
$$
\text{div} \left(a_{ij} \nabla u^2\right) = 2(p-1)H^{p}(\nabla u) \,,
$$
which is \eqref{P_2}.
\end{proof}

The following lemma will be useful to find a lower bound for $\mathcal{L}H(\nabla u)^2$.

\begin{lemma}
Let $u$ be a smooth function. Then we have 
%
%
    \begin{equation}\label{P_11}
      a_{ij}\partial^2_{\xi_k \xi_l}H^2(\nabla u)u_{ik}u_{jl} \geq \frac{a_{ij}u_{ij}}{N} \mathcal{A} +\frac{N}{N-1}  \mathcal{B}  \mathcal{C} \,,
      \end{equation}
where
$$
  \mathcal{A} =  \partial_{\xi_i} H(\nabla u) \partial_{\xi_j} H(\nabla u)u_{ij}+H(\nabla u)\partial^2_{\xi_i \xi_j}H(\nabla u)u_{ij}\,,
$$
$$
  \mathcal{B} = \dfrac{1}{N}H^{p-1}(\nabla u)\partial^2_{\xi_i \xi_j}H(\nabla u)u_{ij}
  -\frac{N-1}{N}(p-1)H^{p-2}(\nabla u)\partial_{\xi_i} H(\nabla u) \partial_{\xi_j} H(\nabla u)u_{ij} \,,
$$
and 
$$
  \mathcal{C} = \dfrac{1}{N}H(\nabla u)\partial^2_{\xi_i \xi_j}H(\nabla u)u_{ij}  -\frac{N-1}{N}\partial_{\xi_i} H(\nabla u) \partial_{\xi_j} H(\nabla u)u_{ij} \,.
$$    
\end{lemma}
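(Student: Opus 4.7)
I will work at a point where $\nabla u \ne 0$ (so all the derivatives of $H$ are defined) and reduce the inequality \eqref{P_11} to a single Cauchy--Schwarz applied to a symmetric matrix of rank at most $N-1$. To keep the algebra compact, set
$$
w_i = \partial_{\xi_i} H(\nabla u), \quad P_{ij} = w_i w_j, \quad Q_{ij} = H(\nabla u)\,\partial^2_{\xi_i\xi_j}H(\nabla u),
$$
so that by \eqref{aij} one has $a_{ij} = H^{p-2}\bigl[(p-1)P_{ij} + Q_{ij}\bigr]$, and
$\partial^2_{\xi_k\xi_l}H^2(\nabla u) = 2(P_{kl} + Q_{kl})$. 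I will also write $\alpha = P_{ij}u_{ij}$, $\beta = Q_{ij}u_{ij} = \operatorname{tr}(Qu)$, and $\tau = \operatorname{tr}\!\bigl((Qu)^2\bigr)$.

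First I would carry out the algebraic reduction. Expanding the LHS of \eqref{P_11} and using symmetry of $u_{ij}$, one checks the elementary identities $P_{ij}P_{kl}u_{ik}u_{jl} = \alpha^2$, $P_{ij}Q_{kl}u_{ik}u_{jl} = Q_{ij}P_{kl}u_{ik}u_{jl} = (uw)^T Q (uw) =: \gamma \ge 0$, and $Q_{ij}Q_{kl}u_{ik}u_{jl} = \tau$, so that
$$
a_{ij}\partial^2_{\xi_k\xi_l}H^2(\nabla u)\,u_{ik}u_{jl} = 2H^{p-2}\bigl[(p-1)\alpha^2 + p\gamma + \tau\bigr].
$$
On the RHS, a direct expansion using $\mathcal{A} = \alpha + \beta$, $\mathcal{B} = \tfrac{H^{p-2}}{N}\bigl[\beta - (N-1)(p-1)\alpha\bigr]$, $\mathcal{C} = \tfrac{1}{N}\bigl[\beta - (N-1)\alpha\bigr]$, and $a_{ij}u_{ij} = H^{p-2}\bigl[(p-1)\alpha + \beta\bigr]$ produces a miraculous cancellation of all $\alpha\beta$ cross-terms, leaving
$$
\frac{a_{ij}u_{ij}}{N}\mathcal{A} + \frac{N}{N-1}\mathcal{B}\mathcal{C} = H^{p-2}\!\left[(p-1)\alpha^2 + \frac{\beta^2}{N-1}\right].
$$
Thus \eqref{P_11} reduces to the pointwise inequality
$$
(p-1)\alpha^2 + 2p\gamma + 2\tau \geq \frac{\beta^2}{N-1}.
$$

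The key step is now to prove $2\tau \geq \beta^2/(N-1)$; the remaining terms $(p-1)\alpha^2$ and $2p\gamma$ are nonnegative (since $p>1$ and $Q$ is positive semidefinite, by convexity of $H^2$) and can be thrown away. Here I would exploit the fact that, by the Euler-type identity \eqref{H=0} evaluated at $\xi = \nabla u$, one has $Q\,\nabla u = 0$, so the symmetric positive semidefinite matrix $Q$ has rank at most $N-1$. Let $Q^{1/2}$ be its symmetric positive semidefinite square root and set $A := Q^{1/2} u\, Q^{1/2}$. Then $A$ is symmetric, $\operatorname{rank}(A)\le \operatorname{rank}(Q) \le N-1$, and the cyclic property of the trace gives $\operatorname{tr}(A) = \operatorname{tr}(Qu) = \beta$ and $\operatorname{tr}(A^2) = \operatorname{tr}((Qu)^2) = \tau$. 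Applying the Cauchy--Schwarz inequality for symmetric matrices in the form $(\operatorname{tr} A)^2 \le \operatorname{rank}(A)\cdot \operatorname{tr}(A^2)$ yields $\beta^2 \le (N-1)\tau$, hence
$$
2\tau \;\geq\; \frac{2\beta^2}{N-1} \;\geq\; \frac{\beta^2}{N-1},
$$
and \eqref{P_11} follows.

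The main obstacle is the bookkeeping in the first step: showing that the two apparently disparate combinations on the right of \eqref{P_11} assemble into the clean expression $H^{p-2}[(p-1)\alpha^2 + \beta^2/(N-1)]$ requires keeping track of the cancellation of cross terms. Once this is recognized, the analytic core is the very classical Cauchy--Schwarz trace inequality applied to $Q^{1/2}uQ^{1/2}$, whose relevance is not visible from \eqref{P_11} itself but becomes natural once the rank deficiency of $Q$ (forced by \eqref{H=0}) is used.
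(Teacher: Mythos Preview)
Your proof is correct and follows essentially the same route as the paper: both reduce the right-hand side to $H^{p-2}\bigl[(p-1)\alpha^2+\beta^2/(N-1)\bigr]$, expand the left-hand side, discard the nonnegative cross term $\gamma$, and then exploit that $\nabla^2_\xi H(\nabla u)$ has rank at most $N-1$ to get $\tau\ge \beta^2/(N-1)$ via Cauchy--Schwarz. The only cosmetic difference is that the paper carries out the last step by explicitly diagonalizing $\nabla^2_\xi H$ and applying Cauchy--Schwarz to the diagonal entries of $OUO^T$, whereas you package the same idea more compactly as $(\operatorname{tr} A)^2\le \operatorname{rank}(A)\,\operatorname{tr}(A^2)$ with $A=Q^{1/2}uQ^{1/2}$.
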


\begin{proof}
Let
\begin{equation*}
  A_1=(p-1)H^{p-2}(\nabla u)\partial_{\xi_{i}} H(\nabla u)\partial_{\xi_{j}} H(\nabla u)u_{ij}, \quad B_1=H^{p-1}(\nabla u)\partial^2_{\xi_{i}\xi_{j}} H(\nabla u)u_{ij}\,,
\end{equation*}
\begin{equation*}
 A_2=\partial_{\xi_{i}} H(\nabla u)\partial_{\xi_{j}} H(\nabla u)u_{ij}, \quad  B_2=H(\nabla u)\partial^2_{\xi_{i}\xi_{j}} H(\nabla u)u_{ij}\,.
\end{equation*}
In terms of this notation we have that
\begin{equation*}
  a_{ij}u_{ij}= A_1 + B_1, \quad \text{and} \quad \dfrac{1}{2}\partial^2_{\xi_i\xi_j} H^2(\nabla u)u_{ij}=A_2 + B_2 \,,
\end{equation*}
which implies that the right-hand side of \eqref{P_11} can be written as
$$
 \frac{a_{ij}u_{ij}}{N} \mathcal{A} +\frac{N}{N-1}  \mathcal{B}  \mathcal{C} 
 = \dfrac{(A_1+B_1)(A_2+B_2)}{N}+\dfrac{N}{N-1}\left(\dfrac{B_1}{N}-\dfrac{N-1}{N}A_1\right)\left(\dfrac{B_2}{N}-\dfrac{N-1}{N}A_2\right)
$$
and after some computation we obtain that 
\begin{equation} \label{right}
 \frac{a_{ij}u_{ij}}{N} \mathcal{A} +\frac{N}{N-1}  \mathcal{B}  \mathcal{C} 
 = \  A_1A_2+\dfrac{B_1B_2}{N-1} \,.
\end{equation}

%
\noindent The left hand side of \eqref{P_11} is
\begin{multline}\label{left}
  a_{ij}\partial^2_{\xi_k \xi_l}H^2(\nabla u)u_{ik}u_{jl}=A_1A_2+pH^{p-1}(\nabla u)\partial_{\xi_{i}} H(\nabla u)\partial_{\xi_{j}} H(\nabla u)\partial^2_{\xi_l \xi_k}H(\nabla u)u_{ik}u_{jl} \\
  + H^{p}(\nabla u)\partial^2_{\xi_i \xi_j}H(\nabla u)\partial^2_{\xi_l \xi_k}H(\nabla u)u_{ik}u_{jl}.
\end{multline}
We observe that
\begin{equation} \label{manca1}
  A_1A_2=(p-1)H^{p-2}(\nabla u)\left(\partial_{\xi_{i}} H(\nabla u)\partial_{\xi_{k}} H(\nabla u)u_{ik}\right)^2 \,.
\end{equation}
Since $\partial^2_{\xi_i \xi_j}H(\nabla u)$ is semipositively definite and by using Kato inequality 
$$
 a_{ij}\partial^2_{\xi_k \xi_l}H^2(\nabla u)u_{ik}u_{jl} \geq  a_{ij} \partial_{\xi_k}H(\nabla u) \partial_{\xi_l}H(\nabla u)u_{ik}u_{jl}
$$ 
(see \cite[Lemma 2.2]{WangXia}), we obtain that
\begin{multline} \label{manca2}
 pH^{p-1}(\nabla u)\partial_{\xi_{i}} H(\nabla u)\partial_{\xi_{j}} H(\nabla u)\partial^2_{\xi_l \xi_k}H(\nabla u)u_{ik}u_{jl}\\
 =pH^{p-1}(\nabla u)
  \partial^2_{\xi_l \xi_k}H(\nabla u)\left(\partial_{\xi_{i}} H(\nabla u)u_{ik}\right)
\left(\partial_{\xi_{j}} H(\nabla u)u_{jl}\right)\geq 0 \,.
\end{multline}
The term
\begin{equation*}
    H^{p}(\nabla u)\partial^2_{\xi_i \xi_j}H(\nabla u)\partial^2_{\xi_l \xi_k}H(\nabla u)u_{ik}u_{jl}
\end{equation*}
is nonnegative definite as well. Indeed, the matrix $\partial^2_{\xi_l \xi_k}H(\nabla u)$ has $N - 1$ strictly positive eigenvalues and one null
eigenvalue (\cite[Lemma 2.4]{CFV} and \cite[Lemma 2.5]{CFV}). Hence, we can write the
matrix
\begin{equation*}
  \partial^2_{\xi_l \xi_k}H(\nabla u)=O^T\Lambda O,
\end{equation*}
where the matrix $O$ is orthogonal and the matrix $\Lambda$ is diagonal and such that  
$$
\Lambda=\text{diag}(\mu_1, \mu_2,\ldots, \mu_{n-1},0) \,,
$$
with $\mu_i\geq 0$ for any $i = 1, \ldots, n-1$. Let $U =(u_{ij})_{i,j=1,\ldots,N}$ and $\widetilde{U}=OUO^T$. Then we have
\begin{eqnarray*}
H^{p}(\nabla u)\partial^2_{\xi_i \xi_j}H(\nabla u)\partial^2_{\xi_l \xi_k}H(\nabla u)u_{ik}u_{jl}&=&H^{p}(\nabla u)\text{Tr}\left(O^T\Lambda OUO^T\Lambda OU\right)\\
&=&
H^{p}(\nabla u)\text{Tr}\left(\Lambda OUO^T\Lambda OUO^T\right)\\
&=&H^{p}(\nabla u)\text{Tr}\left(\Lambda \widetilde{U}\Lambda \widetilde{U}\right) \,,
\end{eqnarray*}
and from the definition of $\Lambda$ and $\widetilde{U}$ we obtain
\begin{eqnarray*}
H^{p}(\nabla u)\partial^2_{\xi_i \xi_j}H(\nabla u)\partial^2_{\xi_l \xi_k}H(\nabla u)u_{ik}u_{jl}&=&
H^{p}(\nabla u) \mu_i \mu_j \widetilde{u}^2_{ij}\\ 
&=& H^{p}(\nabla u)\mu^2_i \widetilde{u}^2_{ii} + H^{p}(\nabla u)\sum_{i\neq k}\mu_i \mu_k \widetilde{u}^2_{ik}\geq H^{p}(\nabla u)\mu^2_i \widetilde{u}^2_{ii},
\end{eqnarray*}
where we used that $\mu_i \geq 0$. From Cauchy-Schwarz inequality, we obtain 
$$
H^{p}(\nabla u)\partial^2_{\xi_i \xi_j}H(\nabla u)\partial^2_{\xi_l \xi_k}H(\nabla u)u_{ik}u_{jl}
 \geq \dfrac{1}{N-1}H^{p}(\nabla u)\left( \mu_i \widetilde{u}_{ii}\right)^2  \,,
$$
and hence
\begin{equation} \label{manca3}
H^{p}(\nabla u)\partial^2_{\xi_i \xi_j}H(\nabla u)\partial^2_{\xi_l \xi_k}H(\nabla u)u_{ik}u_{jl} \geq \dfrac{B_1B_2}{N-1} \,.
\end{equation}
From  \eqref{right}, \eqref{left}, \eqref{manca1}, \eqref{manca2} and \eqref{manca3} we obtain \eqref{P_11}.
\end{proof}

\begin{lemma}\label{lemma_MaxPrinc_Du_u}
Let $E \subset \R^N$ be a domain and let $u$ be such that $\Delta_Hu=0$ in $E$. We have
\begin{equation}\label{P_1}
\mathcal{L} (H^2(\nabla u)) \geq \frac{2(N+p-2)(p-1) }{N-1}H^{p-2}(\nabla u)\left( \partial_{\xi_i} H(\nabla u) \partial_{\xi_j} H(\nabla u)  u_{ij}\right)^2 \,,
\end{equation}
where $\nabla u \neq0$.

Moreover if $E$ is bounded then $H(\nabla u)$ satisfies the maximum principle. 
\end{lemma}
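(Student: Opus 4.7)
The plan is a Bochner-type computation leveraging \eqref{P_11}. First, I would differentiate the equation $\Delta^H_p u = \partial_i[H^{p-1}(\nabla u)\partial_{\xi_i}H(\nabla u)]=0$ with respect to $x_k$ and commute partial derivatives, obtaining $\partial_i[a_{ij}(\nabla u)u_{jk}]=0$ for every $k$. In other words, each partial derivative $u_k$ of $u$ solves the linearized equation $\mathcal{L}(u_k)=0$; this is the analogue of the fact that partials of harmonic functions are harmonic.

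Next, I would expand $\mathcal{L}(H^2(\nabla u))=\partial_i(a_{ij}\,\partial_j H^2(\nabla u))$ by the chain rule, writing $\partial_j H^2(\nabla u)=\partial_{\xi_l}H^2(\nabla u)\,u_{lj}$, differentiating once more in $x_i$, and rearranging the product rule. The piece that is of lower order in derivatives of $H^2$ assembles into $\partial_{\xi_l}H^2(\nabla u)\cdot\mathcal{L}(u_l)$, which vanishes by the first step. What remains is the Bochner-type identity
\[
\mathcal{L}(H^2(\nabla u))=a_{ij}\,\partial^2_{\xi_k\xi_l}H^2(\nabla u)\,u_{ik}u_{jl}.
\]

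Now I would apply \eqref{P_11}. Since $\Delta^H_p u=a_{ij}u_{ij}=0$, the $\mathcal{A}$-term in \eqref{P_11} drops out, leaving $\tfrac{N}{N-1}\mathcal{B}\mathcal{C}$. Using $a_{ij}u_{ij}=0$ once more as an algebraic relation, one has
\[
H^{p-1}(\nabla u)\,\partial^2_{\xi_i\xi_j}H(\nabla u)\,u_{ij}=-(p-1)H^{p-2}(\nabla u)\,X, \qquad X:=\partial_{\xi_i}H(\nabla u)\,\partial_{\xi_j}H(\nabla u)\,u_{ij}.
\]
Substituting this into the definitions of $\mathcal{B}$ and $\mathcal{C}$ yields $\mathcal{B}=-(p-1)H^{p-2}(\nabla u)\,X$ and $\mathcal{C}=-\tfrac{N+p-2}{N}X$, and the product produces exactly the inequality \eqref{P_1} up to the constants.

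For the maximum principle, the right-hand side of \eqref{P_1} is non-negative, so $H^2(\nabla u)$ is a subsolution of the linear, possibly degenerate, elliptic operator $\mathcal{L}$ wherever $\nabla u\neq 0$. Since $u\in C^{1,\alpha}(\overline E)$, the function $H(\nabla u)$ is continuous on $\overline E$, and the weak maximum principle, applied either on compact subsets of $\{\nabla u\neq 0\}$ and passed to a limit, or via a regularization of the form $H^2(\nabla u)+\varepsilon$, gives $\max_{\overline E}H(\nabla u)=\max_{\partial E}H(\nabla u)$ when $E$ is bounded. The main obstacle is precisely this degeneracy of $\mathcal{L}$ on the critical set $\{\nabla u=0\}$, which is the recurring complication with $p$-Laplace-type operators; the Bochner step itself, once the linearized equation $\mathcal{L}(u_k)=0$ is in hand, is notation-heavy but mechanical.
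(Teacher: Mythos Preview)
Your proposal is correct and follows essentially the same route as the paper: both derive the Bochner-type identity $\mathcal{L}(H^2(\nabla u))=a_{ij}\,\partial^2_{\xi_k\xi_l}H^2(\nabla u)\,u_{ik}u_{jl}$ (the paper by direct expansion, you via $\mathcal{L}(u_k)=0$) and then invoke \eqref{P_11} together with $a_{ij}u_{ij}=0$ to obtain \eqref{P_1}. For the maximum principle the paper's handling of the critical set is slightly more direct than your exhaustion argument---it simply observes that $H(\nabla u)=0$ on the closed set $E_0=\{\nabla u=0\}$, so the maximum over $\overline{E\setminus E_0}$, which by \eqref{P_1} lies on $\partial E\cup\partial E_0$, is automatically attained on $\partial E$.
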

\begin{proof}
We first notice the following Bochner formula
\begin{equation} \label{Bochner2}
a_{ij}\partial^2_{ij} H^2(\nabla u) = a_{ij}\partial^2_{\xi_k \xi_l}H^2(\nabla u)u_{ik}u_{jl}-a_{ijl}\partial_l H^2(\nabla u)u_{ij}
\end{equation}
where $\nabla u \neq 0$. Indeed, from \eqref{aij} and \eqref{finsleraijuj} and since $\Delta_p^Hu=0$ we have
\begin{eqnarray*}
a_{ij} \partial_{ij}^2 \left(H^2(\nabla u)\right)&=& a_{ij} \partial_{j}\left(\partial_{\xi_{k}} H^2 (\nabla u)u_{ik}\right)=a_{ij} \partial^2_{\xi_{k}\xi_{l}} H^2(\nabla u) u_{ik}u_{jl}+a_{ij} \partial_{\xi_{k}} H^2(\nabla u) u_{ijk}\\
&=&a_{ij} \partial^2_{\xi_{k}\xi_{l}} H^2(\nabla u) u_{ik}u_{jl}+ \partial_{\xi_{k}} H^2(\nabla u)\partial_{k}(\underbrace{a_{ij}u_{ij}}_{\Delta^H_p u=0})-\partial_{\xi_{k}} H^2(\nabla u)\partial_k(a_{ij})u_{ij}\\
&=&a_{ij} \partial^2_{\xi_{k}\xi_{l}} H^2(\nabla u) u_{ik}u_{jl}-2H(\nabla u)\partial_{\xi_{k}} H(\nabla u)a_{ijl}u_{lk}u_{ij} \,,
\end{eqnarray*}
where $\nabla u \neq 0$. Then
\begin{equation}\label{LH^2}
  \mathcal{L} (H^2(\nabla u))=a_{ij}\partial^2_{ij} H^2(\nabla u)+a_{ijl}\partial_l H^2(\nabla u)u_{ij}=a_{ij}\partial^2_{\xi_k \xi_l}H^2(\nabla u)u_{ik}u_{jl}\,,
\end{equation}
which proves \eqref{Bochner2}.

By differentiating
\begin{equation*}
  \Delta_p^Hu=0
\end{equation*}
with respect to $x_k$ where $\nabla u \neq 0$, we obtain
\begin{equation*}
  \partial_i\left((p-1)H^{p-2}(\nabla u)\partial_{\xi_{i}} H(\nabla u)\partial_{\xi_{j}}H(\nabla u)u_{jk}+ H^{p-1}(\nabla u)\partial^2_{\xi_{i}\xi_{j}} H(\nabla u)u_{jk}\right)=0.
\end{equation*}
We multiply the above equation by $H(\nabla u)\partial_{\xi_{k}} H(\nabla u)$ and obtain
\begin{eqnarray*}
  0&=&H(\nabla u)\partial_{\xi_{k}} H(\nabla u)\partial_i\left((p-1)H^{p-2}(\nabla u)\partial_{\xi_{i}} H(\nabla u)\partial_{\xi_{j}}H(\nabla u)u_{jk}+ H^{p-1}(\nabla u)\partial^2_{\xi_{i}\xi_{j}} H(\nabla u)u_{jk}\right)\\
  &=&\partial_i\left(H(\nabla u)\partial_{\xi_{k}} H(\nabla u)\left((p-1)H^{p-2}(\nabla u)\partial_{\xi_{i}} H(\nabla u)\partial_{\xi_{j}}H(\nabla u)u_{jk}+ H^{p-1}(\nabla u)\partial^2_{\xi_{i}\xi_{j}} H(\nabla u)u_{jk}\right)\right)\\
  &-&\partial_i\left(H(\nabla u)\partial_{\xi_{k}} H(\nabla u)\right)\left((p-1)H^{p-2}(\nabla u)\partial_{\xi_{i}} H(\nabla u)\partial_{\xi_{j}}H(\nabla u)u_{jk}+ H^{p-1}(\nabla u)\partial^2_{\xi_{i}\xi_{j}} H(\nabla u)u_{jk}\right),
\end{eqnarray*}
that is
\begin{multline*}
\partial_i\left(H(\nabla u)\partial_{\xi_{k}}  H(\nabla u)\right)\Big((p-1)H^{p-2}(\nabla u)\partial_{\xi_{i}} H(\nabla u)\partial_{\xi_{j}}H(\nabla u)u_{jk} + H^{p-1}(\nabla u)\partial^2_{\xi_{i}\xi_{j}} H(\nabla u)u_{jk}\Big) \\
 =\partial_i\Big(H(\nabla u)\partial_{\xi_{k}} H(\nabla u)\Big((p-1)H^{p-2}(\nabla u)\partial_{\xi_{i}} H(\nabla u)\partial_{\xi_{j}}H(\nabla u)u_{jk} + H^{p-1}(\nabla u)\partial^2_{\xi_{i}\xi_{j}} H(\nabla u)u_{jk}\Big)\Big) \,,
\end{multline*}
and from the definition of $a_{ij}$, we have
%
\begin{eqnarray}\label{secondline}
  && \partial_i\left(H(\nabla u)\partial_{\xi_{k}}  H(\nabla u)\right)\Big((p-1)H^{p-2}(\nabla u)\partial_{\xi_{i}} H(\nabla u)\partial_{\xi_{j}}H(\nabla u)u_{jk} \\
  &&\quad+ H^{p-1}(\nabla u)\partial^2_{\xi_{i}\xi_{j}} H(\nabla u)u_{jk}\Big) =\partial_i\left(a_{ij}  H(\nabla u)\partial_{\xi_{k}} H(\nabla u) u_{jk}\right).\nonumber
\end{eqnarray}
%
%
Since
\begin{equation}\label{bimbo}
\partial_i\left( H(\nabla u)\partial_{\xi_{k}}H(\nabla u)\right)=\partial_{\xi_{l}} H(\nabla u)\partial_{\xi_{k}}H(\nabla u)u_{li}+ H(\nabla u)\partial^2_{\xi_{l}\xi_{k}} H(\nabla u)u_{li},
\end{equation}
we have
\begin{eqnarray*}
&&\partial_k\left( H(\nabla u)\partial_{\xi_{i}}H(\nabla u)\right)\partial_i\left( H(\nabla u)\partial_{\xi_{k}}H(\nabla u)\right)\\
&&=\Big[\partial_{\xi_{i}} H(\nabla u)\partial_{\xi_{j}}H(\nabla u)+H(\nabla u)\partial^2_{\xi_{i}\xi_{j}} H(\nabla u)\Big]\Big[\partial_{\xi_{l}} H(\nabla u)\partial_{\xi_{k}}H(\nabla u)u_{li}+ H(\nabla u)\partial^2_{\xi_{l}\xi_{k}} H(\nabla u)\Big]u_{li}u_{jk}\\
&&=\partial_{\xi_{i}} H(\nabla u)\partial_{\xi_{j}}H(\nabla u)\partial_{\xi_{l}}H(\nabla u)\partial_{\xi_{k}}H(\nabla u)u_{li}u_{jk}+
H(\nabla u)\partial_{\xi_{i}} H(\nabla u)\partial_{\xi_{j}}H(\nabla u)\partial^2_{\xi_{l}\xi_{k}}H(\nabla u)u_{li}u_{jk}\\
&&+H(\nabla u)\partial^2_{\xi_{i}\xi_{j}}H(\nabla u)\partial_{\xi_{l}} H(\nabla u)\partial_{\xi_{k}}H(\nabla u)u_{li}u_{jk}+
H^2(\nabla u)\partial^2_{\xi_{i}\xi_{j}}H(\nabla u)\partial^2_{\xi_{l}\xi_{k}}H(\nabla u)u_{li}u_{jk},
\end{eqnarray*}
From
$$\dfrac{1}{2}\mathcal{L}\left(H^2(\nabla u)\right) =\partial_i\left(a_{ij}H(\nabla u)\partial_{\xi_{k}} H(\nabla u)u_{jk}\right)\,,$$
and \eqref{secondline} we obtain
\begin{eqnarray*}
 \dfrac{1}{2}\mathcal{L}\left(H^2(\nabla u)\right)  &=& (p-1)H^{p-2}(\nabla u)\partial_{\xi_{i}} H(\nabla u)\partial_{\xi_{j}}H(\nabla u)\partial_i\left( H(\nabla u)\partial_{\xi_{k}}H(\nabla u)\right))u_{jk} \nonumber \\
 &&\quad+ H^{p-1}(\nabla u)\partial^2_{\xi_{i}\xi_{j}} H(\nabla u)\partial_i\left( H(\nabla u)\partial_{\xi_{k}}H(\nabla u)\right)u_{jk} \nonumber \\
  &=& \Big[(p-2)H^{p-2}(\nabla u)\partial_{\xi_{i}} H(\nabla u)\partial_{\xi_{j}}H(\nabla u)u_{jk}+
 H^{p-2}(\nabla u)\partial_{\xi_{i}} H(\nabla u)\partial_{\xi_{j}}H(\nabla u)u_{jk} \nonumber \\
 &&\quad+ H^{p-1}(\nabla u)\partial^2_{\xi_{i}\xi_{j}} H(\nabla u)u_{jk}\Big] \partial_i\left( H(\nabla u)\partial_{\xi_{k}}H(\nabla u)\right)\,,
 \end{eqnarray*}
and \eqref{bimbo} yields
\begin{eqnarray}\label{Lmezzi}
 \dfrac{1}{2}\mathcal{L}\left(H^2(\nabla u)\right)&=& (p-1)H^{p-2}(\nabla u)\left(\partial_{\xi_{i}} H(\nabla u)\partial_{\xi_{l}}H(\nabla u)u_{li}\right)^2 \nonumber \\
&&\quad+ (p-1)H^{p-1}(\nabla u)\partial^2_{\xi_{l}\xi_{k}} H(\nabla u)\left(\partial_{\xi_{i}} H(\nabla u)u_{li}\right)
\left(\partial_{\xi_{j}} H(\nabla u)u_{jk}\right) \nonumber \\
&&\quad+  H^{p-1}(\nabla u)\partial^2_{\xi_{i}\xi_{j}} H(\nabla u)\left(\partial_{\xi_{l}} H(\nabla u)u_{li}\right)
\left(\partial_{\xi_{k}} H(\nabla u)u_{jk}\right)\\
 &&\quad+  H^{p}(\nabla u)(\nabla u)\left(\partial^2_{\xi_{i}\xi_{j}} H(\nabla u)u_{jk}\right)\left(\partial^2_{\xi_{l}\xi_{k}} H(\nabla u)u_{li}\right).\nonumber
 \end{eqnarray}
From \eqref{P_11}, \eqref{LH^2} and \eqref{Lmezzi} we obtain \eqref{P_1}. 

We set $E_0=\{x \in E :\ \nabla u = 0\}$; since $u\in C^{1,\alpha}$ then $E_0$ is closed. From \eqref{P_11} we have that $H(\nabla u)^2$ satisfies a maximum principle in $E \setminus E_0$ and hence $\max H(\nabla u)^2$ is attained at $\partial E \cup \partial E_0$. Since $H(\nabla u)=0$ in $ E_0$, we have that $ H(\nabla u)$ attains the maximum at $\partial E$, which yields that the maximum principle holds.
\end{proof}

By using Lemmas \ref{lemma_maxprinc1} and \ref{lemma_MaxPrinc_Du_u} we can prove a maximum principle for a $P$-function which is suitable for estimating the blow-up of the gradient. In particular, it takes care of the presence of the neck $\mathcal{N}_{\delta}(w)$.

Let $f \in C^2 (\overline \Omega)$ be a cut-off function such that
\begin{equation} \label{f_1}
|f| =1 \text { in } \overline\Omega_\delta \setminus \mathcal{N}_{\delta}(w) \,, \quad f= 0 \text { in } \mathcal{N}_{\delta}\left(\frac w2 \right) \, .
\end{equation}
Moreover we choose $f$ such that
\begin{equation} \label{f_2}
 \frac{f}{w} \leq |\nabla f|^2 \quad \text{ and } \quad |\nabla^2 f | \leq \frac{1}{w^2} \,\text { in } \mathcal{N}_{\delta}(w) \setminus  \mathcal{N}_{\delta}\left(\frac w2 \right).
\end{equation}

\begin{theorem}\label{MaximumPrinciple}
Let $u_\delta$ be such that $\Delta^H_p u_\delta = 0$ in $\Omega_\delta$.
Let $f$ satisfy \eqref{f_1} and \eqref{f_2}.

There exists $\lambda_0=\lambda_0(\|f\|_{C^2} , \|H\|_{C^3(\partial B_H(0,1))})$, with $\lambda_0 = O(w^{-2})$ as $w \to 0^+$, such that the function
\begin{equation}\label{pfunction}
  P(x)= f(x) H(\nabla u_\delta)^2+ \lambda u^2_\delta
\end{equation}
satisfies the maximum principle for any $\lambda \geq \lambda_0$, i.e.
\begin{equation} \label{P_3}
\max_{x \in\overline{\Omega}_\delta} P(x) = \max_{x \in \partial \Omega_\delta} P(x)
\end{equation}
for $\lambda \geq \lambda_0$.
\end{theorem}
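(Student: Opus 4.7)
The plan is to show $\mathcal{L}P \geq 0$ pointwise in $\Omega_\delta \cap \{\nabla u_\delta \neq 0\}$ for $\lambda \geq \lambda_0$ sufficiently large, and then extend to $\overline{\Omega}_\delta$ by mimicking the treatment of the degeneracy set at the end of Lemma~\ref{lemma_MaxPrinc_Du_u}.

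Using the product rule for the divergence-form operator, $\mathcal{L}(fg) = f\mathcal{L}g + g\,\mathcal{L}f + 2 a_{ij} f_i g_j$, and applying Lemma~\ref{lemma_maxprinc1} to $\lambda u_\delta^2$, we expand
\begin{equation*}
\mathcal{L}P = f\,\mathcal{L}(H^2(\nabla u_\delta)) + H^2(\nabla u_\delta)\,\mathcal{L}f + 2 a_{ij} f_i \bigl(H^2(\nabla u_\delta)\bigr)_j + 2\lambda(p-1) H^p(\nabla u_\delta).
\end{equation*}
By Lemma~\ref{lemma_MaxPrinc_Du_u}, $\mathcal{L}(H^2(\nabla u_\delta)) \geq 0$, and since $f \geq 0$ the first term is nonnegative. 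The last term is nonnegative and provides the buffer to be controlled by $\lambda$. Outside the transition annulus $T := \mathcal{N}_\delta(w)\setminus \mathcal{N}_\delta(w/2)$ we have $f \equiv 1$ or $f \equiv 0$ and $\nabla f \equiv 0$, so $\mathcal{L}P \geq 0$ is immediate. Thus the entire work is localized in $T$.

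In $T$, the main task is to dominate the two bad terms $H^2 \mathcal{L}f$ and $2 a_{ij} f_i (H^2)_j$ by the sum $f\mathcal{L}(H^2) + 2\lambda(p-1) H^p$. For the cross term, apply Cauchy--Schwarz in the positive semi-definite quadratic form $a_{ij}$: for any $\alpha > 0$,
\begin{equation*}
2\bigl|a_{ij} f_i (H^2)_j\bigr| \leq \alpha\, a_{ij}(H^2)_i(H^2)_j + \alpha^{-1} a_{ij} f_i f_j.
\end{equation*}
The key observation is that, by the decomposition \eqref{Lmezzi} of $\frac{1}{2}\mathcal{L}(H^2)$ into four nonnegative summands, the first two of which exactly reproduce (up to a factor $p-1$) the expansion of $a_{ij}(H^2)_i(H^2)_j/(4H^2)$ coming from \eqref{aij}; consequently
\begin{equation*}
a_{ij}(H^2)_i (H^2)_j \leq \tfrac{2}{\min(1,p-1)}\, H^2 \, \mathcal{L}(H^2).
\end{equation*}
Choosing $\alpha$ proportional to $f/H^2$ allows us to absorb the first piece of the cross term into a fraction of $f\mathcal{L}(H^2)$, while the second piece is bounded by $C H^p |\nabla f|^2/f \leq C H^p/w^2$ using the homogeneity estimate $|a_{ij}| \leq C(\|H\|_{C^2}) H^{p-2}$ and the cut-off bound $|\nabla f|^2/f \leq C/w^2$ (guaranteed by \eqref{f_2} for a standard squared cut-off). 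Analogously, $|H^2 \mathcal{L}f| \leq C(\|f\|_{C^2},\|H\|_{C^3}) H^p/w^2$, where the contribution of $(\partial_i a_{ij})f_j$ (which a priori contains second derivatives of $u_\delta$) is rewritten via $(\partial_i a_{ij})f_j = \partial_i(a_{ij} f_j) - a_{ij} f_{ij}$ and then controlled by yet another Cauchy--Schwarz against $\mathcal{L}(H^2)$, exactly as above.

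Combining, in $T$ we obtain
\begin{equation*}
\mathcal{L}P \geq (1-\eta) f\,\mathcal{L}(H^2) + \bigl(2\lambda(p-1) - C w^{-2}\bigr) H^p(\nabla u_\delta),
\end{equation*}
for some small $\eta \in (0,1)$ and $C = C(\|f\|_{C^2}, \|H\|_{C^3(\partial B_H(0,1))})$. Choosing $\lambda_0 := C/(2(p-1))\cdot w^{-2}$ yields $\mathcal{L}P \geq 0$ on $\Omega_\delta \cap \{\nabla u_\delta \neq 0\}$. Finally, on the closed set $E_0 = \{x \in \Omega_\delta : \nabla u_\delta(x)=0\}$ we have $P = \lambda u_\delta^2$; applying the weak maximum principle on the open set $\Omega_\delta \setminus E_0$ and using continuity of $P$ across $\partial E_0$ (exactly as at the end of Lemma~\ref{lemma_MaxPrinc_Du_u}), we conclude $\max_{\overline{\Omega}_\delta} P = \max_{\partial \Omega_\delta} P$, which is \eqref{P_3}.

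The main obstacle is the cross term $2 a_{ij} f_i (H^2)_j$, which involves $(H^2)_j = 2H\,\partial_{\xi_k}H\, u_{jk}$, i.e.\ second derivatives of $u_\delta$ that are not a priori bounded. The Cauchy--Schwarz argument described above succeeds only because of the algebraic match between the terms in the expansion of $a_{ij}(H^2)_i(H^2)_j$ and those of the lower bound \eqref{Lmezzi} for $\mathcal{L}(H^2)$ derived in Lemma~\ref{lemma_MaxPrinc_Du_u}; this is precisely the ingredient that allows the $|\nabla^2 u_\delta|$-dependence to be absorbed rather than estimated. The quantitative scaling $\lambda_0 = O(w^{-2})$ then traces directly back to the bounds $|\nabla f| \leq C/w$ and $|\nabla^2 f| \leq C/w^2$ from \eqref{f_2}.
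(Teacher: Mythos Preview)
Your overall strategy matches the paper's: expand $\mathcal{L}P$ on $\{\nabla u_\delta\neq 0\}$, show it is nonnegative for large $\lambda$, and treat the degenerate set separately. The Cauchy--Schwarz step for the cross term $2a_{ij}f_i(H^2)_j$, together with the inequality $a_{ij}(H^2)_i(H^2)_j\le C\,H^2\,\mathcal{L}(H^2)$ that you extract from \eqref{Lmezzi}, is correct and is a clean way to absorb that particular term.

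Two points, however, need repair.

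First, your treatment of $H^2\mathcal{L}f$ is circular: the identity $(\partial_i a_{ij})f_j=\partial_i(a_{ij}f_j)-a_{ij}f_{ij}$ simply reads $(\partial_i a_{ij})f_j=\mathcal{L}f-a_{ij}f_{ij}$, so nothing has been reduced. What is actually needed is the direct bound $H^2\,|(\partial_i a_{ij})f_j|=H^2\,|a_{ijl}u_{li}f_j|\le C\,H^{p-1}\,|\nabla f|\,|\nabla^2 u_\delta|$ (using $|a_{ijl}|\le C_0 H^{p-3}$), followed by AM--GM against $f\mathcal{L}(H^2)$. For that last step to work one must invoke the \emph{full} lower bound
\[
\mathcal{L}(H^2)=\tfrac12\,H^{p-2}\,\partial^2_{\xi_i\xi_j}H^2\,\partial^2_{\xi_k\xi_l}H^2\,u_{ik}u_{jl}\ \ge\ c\,H^{p-2}\,|\nabla^2 u_\delta|^2,
\]
which follows from \eqref{LH^2} together with the uniform convexity of $H^2$; this is strictly stronger than the comparison with \eqref{Lmezzi} you quote, because your inequality only controls the specific combination $a_{ij}(H^2)_i(H^2)_j$, not an arbitrary linear form in $\nabla^2 u_\delta$. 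The paper sidesteps this by writing $\mathcal{L}P$ in non-divergence form from the outset (so that the offending piece appears explicitly as $a_{ijl}\partial_l f\,H^2 u_{ij}$) and then applies AM--GM between $\lambda(p-1)H^p$ and $f\mathcal{L}(H^2)$ to produce a factor $\sqrt{\lambda f}$ in front of $H^{p-1}|\nabla^2 u_\delta|$, absorbing all second-derivative terms at once.

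Second, and more substantively, the argument for the degenerate set $E_0=\{\nabla u_\delta=0\}$ does \emph{not} carry over ``exactly as at the end of Lemma~\ref{lemma_MaxPrinc_Du_u}''. In that lemma the function under study vanishes on $E_0$, so it cannot realize a positive interior maximum there. Here $P=\lambda u_\delta^2$ on $E_0$, which need not vanish; the maximum principle on $\Omega_\delta\setminus E_0$ only pushes the maximum to $\partial\Omega_\delta\cup E_0$, and you still have to exclude interior points of $E_0$. The paper closes this with a separate argument: if $P$ attains its maximum at $x_0$ with $\nabla u_\delta(x_0)=0$, then $P(x_0)=\lambda u_\delta(x_0)^2\ge \lambda u_\delta(x)^2$ for every $x$, so $|u_\delta|$ is maximized at $x_0$, and Lemma~\ref{lemma_maxmin} then forces $x_0\in\partial\Omega$. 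This step genuinely relies on the flux condition in \eqref{DpH} (through Lemma~\ref{lemma_maxmin}) and cannot be replaced by the continuity argument you describe.
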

\begin{proof}
We first notice that if $P$ attains the maximum at a point $x_0$ such that $\nabla u(x_0)=0$ then $x_0 \in \partial \Omega$. Indeed, since $P(x_0) =  \lambda u_{\delta}(x_0)^2 $ we have
$$
f(x) H^2(\nabla u_\delta(x)) + \lambda u_\delta(x)^2 \leq \lambda u_{\delta}(x_0)^2
$$
for any $x \in \Omega_\delta$. In particular $|u_\delta(x)| \leq  |u_{\delta}(x_0)|$ for any $x \in \Omega_\delta$, and Lemma \ref{lemma_maxmin} yields that $x_0 \in \partial \Omega$.

Now let assume that $P$ attains the maximum at a point $x_0$ such that $\nabla u_\delta(x_0)\neq0$.
From \eqref{P_1} and \eqref{P_2} we have
\begin{eqnarray}\label{LP}
  \mathcal{L}P &=& a_{ij}\partial_{ij} f(x) H^2(\nabla u_\delta) +2a_{ij}\partial_i f(x)\partial_{j}H^2(\nabla u_\delta)+ a_{ijl} \partial_l f(x) H^2(\nabla u_\delta) u_{ij}\nonumber \\
  & & \quad + f(x)\left[a_{ij}\partial^2_{ij}H^2(\nabla u_\delta)+ a_{ijl} \partial_l H^2(\nabla u_\delta) u_{ij}\right]+2\lambda(p-1)H^p(\nabla u_\delta)\nonumber\\
  &\geq& a_{ij}\partial_{ij} f(x) H^2(\nabla u_\delta) +2a_{ij}\partial_i f(x)\partial_{j}H^2(\nabla u_\delta)+ a_{ijl} \partial_l f(x) H^2(\nabla u_\delta) u_{ij} \\
  & & \quad +f(x)(p-1)C^2_{N,p}H^{p-2}(\nabla u_\delta)\left( \partial_{\xi_i} H(\nabla u_\delta) \partial_{\xi_j} H(\nabla u_\delta)  u_{ij}\right)^2\nonumber \\
  & & \quad +2\lambda(p-1)H^p(\nabla u_\delta) \,,\nonumber 
  \end{eqnarray}
 where we set
$$
C_{N,p} =\sqrt{\frac{2(N+p-2)(p-1) }{N-1}} \,.
$$ 
Since $H$ is 1-homogeneous, the quantities $a_{ij}H^{2-p}(\nabla u_\delta)$, $a_{ijl}H^{3-p}(\nabla u_\delta)$ and $\partial_{\xi_i} H$ are 0-homogeneous. Hence there exists  $C_0$ depending only on $\|H\|_{C^3(\partial B_H(0,1))}$ such that
\begin{equation}\label{limitati}
|a_{ij} H^{2-p}(\nabla u_\delta)|, |a_{ijl}H^{3-p}(\nabla u_\delta)| \leq C_0 \,,\text{and} \, C_0^{-1}\leq |\partial_{\xi_i} H(\nabla u_\delta)| \leq C_0.
\end{equation}
From \eqref{LP}, \eqref{limitati} and by using Cauchy-Schwarz inequality, we have
\begin{eqnarray*}
  \mathcal{L}P &\geq&\left[\lambda(p-1) - C_0 \|\nabla^2 f\|_{C^0}\right]H^p(\nabla u_\delta)\\
   && \quad + \left[2(p-1)C_{N,p} \sqrt{\lambda f}C^{-2}_0- C\|\nabla^2 f\|_{C^0} \right] H^{p-1}(\nabla u_\delta)\left|\nabla^2 u_\delta\right|.
\end{eqnarray*}
By choosing $\lambda_0$ large enough we obtain that $\mathcal{L}P \geq 0$ for any $\lambda \geq \lambda_0$, and a simple calculation shows that  $\lambda_0$ depends only on $\|H\|_{C^3(\partial B_H(0,1))}$ and $\|f\|_{C^2}$, and that $\lambda_0 = O(w^{-2})$.  
\end{proof}

\begin{remark}\label{remark_Pfunction}
We mention that there are other maximum principles for $H(\nabla u)$ available in literature. In particular, one can prove that 
\begin{equation} \label{gonnarosa}
\mathcal{L} H(\nabla u)^p \geq 0
\end{equation} 
(see for instance \cite{HS2} and \cite{Barbu_Enache}). Since $\mathcal{L}$ is associated to the $p$-Laplace equation, \eqref{gonnarosa} may appear to be more natural to be considered. Unfortunately, \eqref{gonnarosa} does not serve to our purposes, in particular it can not be employed in the proof of Theorem \ref{MaximumPrinciple} due to the presence of the term $u^2$ in the $P$-function (another power of $u$ would produce an analogous problem). This is the reason why we considered the quantity $\mathcal{L} H(\nabla u)^2$ in Lemma \ref{lemma_MaxPrinc_Du_u}.
\end{remark}

\section{Bounds for the gradient outside the neck} \label{sect_unif_bounds}
As we have mentioned in the Introduction, we will show the following behaviour of the gradient of $u_\delta$: (i) it may have a blow-up at the point where the two inclusions touch and (ii) it remains uniformly bounded far from that point. In this section we prove (ii), while (i) will be proved in Section \ref{sect_thm1}. 

We first notice that the gradient of $u_\delta$ is uniformly bounded on $\partial \Omega$ independently of $\delta$, i.e.
that there exists a constant $C > 0$ independent of $\delta$ such that
\begin{equation}\label{UniformeLimitatezza1}
  \max_{\partial \Omega}H(\nabla u_{\delta})\leq C.
\end{equation}

Indeed, \eqref{UniformeLimitatezza1} follows from the following argument. We can choose a smooth domain $A \subset \Omega$ such that the inclusions $D_\delta^1$ and $D_\delta^2$ are contained in $A$ and such that $A$ is far from $\partial \Omega$ more than $K/2$. The uniform bound on the gradient on $\partial \Omega$ can be obtained by comparison principle, in particular by comparing $u_\delta$ to $v_*$ and $v^*$, where $v_*$ and $v^*$ are the solutions to
$$
\begin{cases}
\Delta^H_p v_* = 0 &  \text{ in } \Omega \setminus \overline{A} \,, \\
v_*= \varphi &  \text{ on } \partial \Omega \,, \\
v_*= \min_{\partial \Omega} \varphi & \text{ on } \partial A \,,
\end{cases}
$$
and
$$
\begin{cases}
\Delta^H_p v^* = 0 &  \text{ in } \Omega \setminus \overline{A} \,, \\
v^*= \varphi &  \text{ on } \partial \Omega \,, \\
v^*= \max_{\partial \Omega} \varphi & \text{ on } \partial A \,,
\end{cases}
$$
respectively. 

It is clear that $v_*$ and $v^*$ are a lower and an upper barrier for $u_\delta$, respectively, at any point on $\partial \Omega$. Hence, the normal derivative of $u_\delta$ can be bounded in terms of the gradient of $v_*$, $v^*$, and thus $H(u_\delta)$ can be bounded by some constant $C$ which depends only on $K$ and $\varphi$, which implies \eqref{UniformeLimitatezza1}.

\medskip

Now we show that the gradient is uniformly bounded on the boundary of the inclusions at the points which are not in the neck.

\begin{lemma} \label{lemma_bound_partial_B}
Let $u_{\delta}$ be the solution of \eqref{DpH} and let $w>0$ be fixed. There exists a constant $C > 0$ independent of $\delta$ such that
\begin{equation}\label{UniformeLimitatezza}
  \max_{\partial D^i_{\delta} \setminus \partial \mathcal{N}_{\delta}(w)} H(\nabla u_{\delta})\leq C \,, \quad \quad i=1,2.
\end{equation}
\end{lemma}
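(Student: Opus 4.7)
The plan is a barrier argument exploiting the fact that outside the neck the two inclusions are uniformly separated in the anisotropic distance. For any fixed $w>0$, the definition of $\mathcal{N}_\delta(w)$ in \eqref{neck}, combined with \eqref{dist_B1B2_K} and the fact that $D_\delta^1, D_\delta^2$ are Wulff shapes sliding along the $x_N$-axis, yields a constant $d_0 = d_0(w, K) > 0$, independent of $\delta \in (0, \delta_0]$, such that $\text{dist}_{H_0}(x_0, \partial \Omega_\delta \setminus \partial D_\delta^i) \geq d_0$ for every $x_0 \in \partial D_\delta^i \setminus \partial \mathcal{N}_\delta(w)$.

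Fix such a point $x_0$, and let $y_i$ denote the center of $D_\delta^i = B_{H_0}(y_i, R_i)$. I would use as an upper barrier the explicit $H$-radial solution $v$ of \eqref{pb_v_anello} in the annulus $A = B_{H_0}(y_i, R_i + \rho) \setminus \overline{B_{H_0}(y_i, R_i)}$, where $\rho = \rho(w, K) > 0$ is fixed with $\rho < d_0/2$, the inner value is $C_r = \mathcal{U}_\delta^i$, and the outer value is $C_R = M'$ with $M' > M := \max_{\partial \Omega} |\varphi|$ sufficiently large. By the explicit formula \eqref{v_anello}, the function $v$ is smooth and $H$-radial, and $|\nabla v|$ on $\partial D_\delta^i$ equals a constant depending only on $\rho$, $R_i$, and $M'$, hence is uniformly bounded in $\delta$.

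Next, I would apply the weak comparison principle for $\Delta^H_p$ (a consequence of the monotonicity used in the uniqueness proof) to $u_\delta$ and $v$ on the ``good patch'' $A^\star = A \setminus \overline{\mathcal{N}_\delta(w/2)}$, which by the choice of $\rho$ is contained in $\Omega_\delta$. On the inner Wulff portion of $\partial A^\star$ one has $v = u_\delta = \mathcal{U}_\delta^i$; on the outer Wulff portion one has $v = M' \geq u_\delta$ by Lemma \ref{lemma_maxmin}; on the remaining side $A^\star \cap \partial \mathcal{N}_\delta(w/2)$, the inequality $v \geq u_\delta$ is forced by taking $M'$ large, since the neck boundary lies strictly inside $A$ and $v$ is monotone in $H_0(\cdot - y_i)$. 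Hence $v \geq u_\delta$ throughout $A^\star$, with equality at $x_0$. Since $u_\delta$ is constant on $\partial D_\delta^i$, $\nabla u_\delta(x_0)$ is parallel to the Euclidean outward normal $\nu(x_0)$, and the one-sided touching at $x_0$ gives $|\partial_\nu u_\delta(x_0)| \leq |\partial_\nu v(x_0)| \leq C$. A symmetric argument with a lower barrier (interchanging $M'$ and $-M'$) provides the two-sided bound, from which \eqref{UniformeLimitatezza} follows with $C$ depending only on $w$, $K$, $R_i$, and $\|\varphi\|_{L^\infty}$.

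The main technical hurdle is verifying that the barrier $v$ dominates $u_\delta$ on the artificial interior boundary $A^\star \cap \partial \mathcal{N}_\delta(w/2)$ without introducing $\delta$-dependence into the constant. This can be handled by further localizing the comparison to $A^\star \cap B_{H_0}(x_0, s_0)$ for small $s_0 = s_0(w) > 0$, so that this artificial boundary is pushed away from the touching point. As an alternative route, one could bypass the barrier construction entirely by invoking boundary $C^{1,\alpha}$-regularity for the anisotropic $p$-Laplacian: since a Wulff ball of radius $\sim d_0/2$ tangent to $\partial D_\delta^i$ at $x_0$ from the $\Omega_\delta$-side fits inside $\Omega_\delta$ uniformly in $\delta$, standard regularity theory produces the desired uniform gradient bound with constants depending only on the prescribed data.
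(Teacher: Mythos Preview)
Your barrier idea is on the right track, but the comparison step has a genuine gap that your proposed fixes do not repair. On the artificial boundary $A^\star\cap\partial\mathcal N_\delta(w/2)$ (or, after your localization, on $A\cap\partial B_{H_0}(x_0,s_0)$), the barrier $v$ depends only on $H_0(\cdot-y_i)$ and is pinned to the value $\mathcal U_\delta^i$ on the inner sphere $\partial D_\delta^i$. Any such artificial boundary meets $\partial D_\delta^i$, so in a neighbourhood of that meeting set $v$ is arbitrarily close to $\mathcal U_\delta^i$ \emph{no matter how large $M'$ is}. Meanwhile the only a priori information on $u_\delta$ there is $|u_\delta|\le\max_{\partial\Omega}|\varphi|$ from Lemma~\ref{lemma_maxmin}, and there is no reason for $u_\delta\le\mathcal U_\delta^i$ near those points. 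Thus the inequality $v\ge u_\delta$ cannot be forced on the artificial boundary by enlarging $M'$, and the comparison principle does not apply on $A^\star$. The same obstruction recurs after any localization that keeps $\partial D_\delta^i$ as the inner boundary of the annulus.

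The paper avoids this difficulty by a different placement of the barrier annulus: instead of taking the inner sphere to be $\partial D_\delta^i$ itself, it takes a \emph{small} Wulff ball $B_{H_0}(z_0,r_1)$ with $r_1=cw$ lying \emph{inside} $D_\delta^i$ and tangent to $\partial D_\delta^i$ only at the single point $z$, and chooses the outer radius $r_2$ so that $B_{H_0}(z_0,r_2)$ is the concentric ball touching $D_\delta^{3-i}$ from outside. The comparison is then carried out on $\bigl(B_{H_0}(z_0,r_2)\setminus\overline{B_{H_0}(z_0,r_1)}\bigr)\cap\Omega_\delta$, whose boundary consists only of pieces of $\partial D_\delta^i$, $\partial B_{H_0}(z_0,r_2)$ and possibly $\partial\Omega$; no neck cut is needed. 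On the $\partial D_\delta^i$ piece one has $H_0(\cdot-z_0)\ge r_1$ with equality only at $z$, so the monotone radial barrier automatically satisfies $\overline v\ge\mathcal U_\delta^i=u_\delta$ (and similarly $\underline v\le u_\delta$). The fact that $z\notin\mathcal N_\delta(w)$ then guarantees $r_2\ge\alpha r_1$ for some $\alpha>1$ independent of $\delta$, which yields the uniform gradient bound via the explicit formula \eqref{v_anello}. The key device you are missing is precisely this interior tangent ball, which eliminates the artificial boundary altogether.
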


\begin{proof}
Let $z \in \partial D^1_{\delta} \setminus \partial \mathcal{N}_{\delta}(w)$ be fixed.  Let  $B_{H_0}(z_0,r_1)$ be the interior touching ball to $\partial D^1_{\delta}$ at $z$ (hence $r_1 < R_1$), and define $B_{H_0}(z_0,r_2)$ as the exterior touching ball to $D^2_{\delta}$ centered at $z_0$. 

The proof consists in comparing the solution $u_\delta$ to an upper barrier $\overline{v}$ and a lower barrier $\underline{v}$ for $u_\delta$ at $z$, which are defined as the solutions to
\begin{equation*}
\begin{cases}
\Delta^H_p \overline{v} = 0 & \text{in } B_{H_0}(z_0,r_2) \setminus \overline{B}_{H_0}(z_0,r_1)\,, \\
\overline{v}=\mathcal{U}^1_{\delta} & \text{on } \partial B_{H_0}(z_0,r_1),\\
\overline{v}=\displaystyle\max_{\partial \Omega} \varphi & \text{on } \partial B_{H_0}(z_0,r_2),
\end{cases}
\end{equation*}
and
\begin{equation*}
\begin{cases}
\Delta^H_p \underline{v} = 0 & \text{in } B_{H_0}(z_0,r_2) \setminus \overline{B}_{H_0}(z_0,r_1)\,, \\
\underline{v}=\mathcal{U}^1_{\delta} & \text{on } \partial B_{H_0}(z_0,r_1),\\
\underline{v}=\displaystyle\min_{\partial \Omega} \varphi & \text{on } \partial B_{H_0}(z_0,r_2),
\end{cases}
\end{equation*}
respectively, where $\mathcal{U}^i_{\delta}$ are defined in \eqref{potenziale} (see \eqref{v_anello} for the explicit expression).
By evaluating the gradient of $\overline v$ and $\underline v$ at $z$ and by using \eqref{LemmaCianchiSalani11}, we have
\begin{equation} \label{frida1}
H(\nabla\overline{v}(z))=
\begin{cases}
\dfrac{N-p}{p-1}\left|\mathcal{U}^1_{\delta}-\displaystyle\max_{\partial \Omega} \varphi\right|\dfrac{r_1^{\frac{1-N}{p-1}}}{r_1^{\frac{p-N}{p-1}}-r_2^{\frac{p-N}{p-1}}} & \text{if } 1<p<N, \\
\\
\left|\mathcal{U}^1_{\delta}-\displaystyle\max_{\partial \Omega} \varphi\right|\dfrac{r_1^{-1}}{\ln \left(r_2r_1^{-1}\right)}& \text{if } N=p,
\end{cases}
\end{equation}
and
\begin{equation}\label{frida2}
H(\nabla\underline{v}(z))=
\begin{cases}
\dfrac{N-p}{p-1}\left|\mathcal{U}^1_{\delta}-\displaystyle\min_{\partial \Omega} \varphi\right|\dfrac{r_1^{\frac{1-N}{p-1}}}{r_1^{\frac{p-N}{p-1}}-r_2^{\frac{p-N}{p-1}}} & \text{if } 1<p<N, \\
\\
\left|\mathcal{U}^1_{\delta}-\displaystyle\min_{\partial \Omega} \varphi\right|\dfrac{r_1^{-1}}{\ln \left(r_2 r_1^{-1} \right)}& \text{if } N=p.
\end{cases}
\end{equation}
Let $r_1=c w$ for some small constant $c>0$. Since $z$ is not in the neck, there exists a constant $\alpha>1$ such that $r_2 \geq \alpha r_1$ for any $\delta \geq 0$, with $\alpha$ not depending on $\delta$ (the constant $\alpha$ can be explicitely calculated by considering the limit configuration for $\delta=0$). Hence we have that
\begin{equation} \label{frida3}
\dfrac{r_1^{\frac{1-N}{p-1}}}{r_1^{\frac{p-N}{p-1}}-r_2^{\frac{p-N}{p-1}}} \leq \frac{1}{cw\left(1 - \alpha^{\frac{p-N}{p-1}}\right)} \quad \text{ for } 1<p<N \,,
\end{equation}
and
\begin{equation} \label{frida4}
\dfrac{r_1^{-1}}{\ln \left(r_2r_1^{-1} \right)} \leq \frac{1}{cw \ln \alpha} \quad \text{ for } N=p \,.
\end{equation}
From \eqref{frida1}-\eqref{frida4} and Lemma \ref{lemma_maxmin} we find that 
$$
H(\nabla u_\delta (z)) \leq C
$$
where $C$ depends only on the dimension $N,p$, $\|\varphi\|_{C^0(\partial \Omega)}$ and $w$, and does not depends on $\delta$.
\end{proof}

The arguments used for proving \eqref{UniformeLimitatezza1} and Lemma \ref{lemma_bound_partial_B} can be used to prove that, in the limit case $\delta=0$, the gradient of $u_0$ remains uniformly bounded. Indeed, from  Lemma \ref{lemma_MaxPrinc_Du_u} we have that $H(\nabla u_0)$ attains the maximum on $\partial \Omega_0 = \partial \Omega \cup \partial D^1_0 \cup \partial D^2_0$. From the third condition in \eqref{PH0}, the maximum and minimum of Lemma $u_0$ are attained on $\partial \Omega$. Hence, the bound on $\partial \Omega$ can be obtained as done for  \eqref{UniformeLimitatezza1}.  The bound on $\partial D^1_0$ (and analogously the one on $\partial D^2_0$) can be obtained by comparison principle, more precisely by comparing $u_0$ and $v_1$ and $v_2$, where $v_i$ is the solution to $\Delta^H_p v_i=0$ in $\Omega \setminus \overline D^1_0$, $v_i=u_0$ on $\partial D^i_0$, $i=1,2$ $v_1=\max_{\partial \Omega} \phi$ and $v_2=\min_{\partial \Omega} \phi$ on $\partial \Omega$. Thus
\begin{equation} \label{u0_W1infty}
H(\nabla u_0) \leq C
\end{equation}
in $\overline \Omega_0$.

\medskip

Now we show that the gradient is bounded outside the neck, and we make explicit the dependency on $w$.

\begin{lemma} \label{lemma_bound_Omega_neck}
Let $u_{\delta}$ be the solution of \eqref{DpH} and let $w>0$. There exists a constant $C > 0$ independent of $\delta$ and $w$ such that
\begin{equation}\label{UniformeLimitatezza}
  \max_{\overline\Omega_\delta \setminus \mathcal{N}_{\delta}(w)}H(\nabla u_{\delta})\leq \frac C w.
\end{equation}
\end{lemma}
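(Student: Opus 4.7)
The plan is to apply the maximum principle for the $P$-function from Theorem \ref{MaximumPrinciple} with the optimal choice $\lambda = \lambda_0 = O(w^{-2})$, and then control $P$ pointwise on $\partial \Omega_\delta$ using the two boundary gradient bounds we already have: the bound \eqref{UniformeLimitatezza1} on $\partial \Omega$ and the bound of Lemma \ref{lemma_bound_partial_B} on $\partial D^i_\delta$ outside the neck. Since on $\overline \Omega_\delta \setminus \mathcal{N}_\delta(w)$ we have $f \equiv 1$, controlling $P$ on the boundary will immediately give the desired bound $H(\nabla u_\delta)^2 \leq P \leq C/w^2$ outside the neck.

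More concretely, choose $w$ small enough (in particular $w < K/2$, where $K$ comes from \eqref{dist_B1B2_K}) so that $\mathcal{N}_\delta(w)$ stays uniformly away from $\partial \Omega$; this guarantees that $f \equiv 1$ on $\partial \Omega$. On $\partial \Omega$, \eqref{UniformeLimitatezza1} and Lemma \ref{lemma_maxmin} yield
$$
P = H(\nabla u_\delta)^2 + \lambda u_\delta^2 \leq C_1 + \lambda_0 \|\varphi\|_{L^\infty(\partial \Omega)}^2 \leq \frac{C}{w^2},
$$
since $\lambda_0 = O(w^{-2})$. On $\partial D^i_\delta$, split the inclusion boundary into the two parts $\partial D^i_\delta \cap \overline{\mathcal{N}_\delta(w/2)}$ and $\partial D^i_\delta \setminus \mathcal{N}_\delta(w/2)$. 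On the first part, $f \equiv 0$ by construction \eqref{f_1}, and so $P = \lambda u_\delta^2 \leq C/w^2$ again by Lemma \ref{lemma_maxmin}. On the second part, apply Lemma \ref{lemma_bound_partial_B} with parameter $w/2$ in place of $w$ (the proof of that lemma gives an explicit constant of order $1/w$, as can be read from \eqref{frida1}--\eqref{frida4}), which yields $H(\nabla u_\delta) \leq C/w$ there, and hence $P \leq (C/w)^2 + \lambda_0 C^2 \leq C/w^2$.

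Combining these estimates, $\max_{\partial \Omega_\delta} P \leq C/w^2$. Theorem \ref{MaximumPrinciple} then gives $\max_{\overline \Omega_\delta} P \leq C/w^2$. For any $x \in \overline \Omega_\delta \setminus \mathcal{N}_\delta(w)$ we have $f(x) = 1$, so
$$
H(\nabla u_\delta(x))^2 \leq H(\nabla u_\delta(x))^2 + \lambda u_\delta(x)^2 = P(x) \leq \frac{C}{w^2},
$$
yielding the desired conclusion $H(\nabla u_\delta) \leq C/w$ in $\overline \Omega_\delta \setminus \mathcal{N}_\delta(w)$.

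The only delicate point is checking that Lemma \ref{lemma_bound_partial_B} truly provides a bound of order $1/w$ on the enlarged set $\partial D^i_\delta \setminus \mathcal{N}_\delta(w/2)$: that is already encoded in the proof of that lemma, since the radius $r_1$ of the interior touching Wulff ball was chosen proportional to $w$, and the ratio $r_2/r_1$ stays bounded below by a constant $\alpha > 1$ that depends only on the geometry of $D_0^1 \cup D_0^2$ (not on $\delta$ nor on $w$). No further work is required beyond invoking Lemma \ref{lemma_bound_partial_B} with parameter $w/2$.
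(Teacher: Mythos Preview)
Your argument is correct and uses the same ingredients as the paper's proof: Theorem \ref{MaximumPrinciple} with $\lambda_0 = O(w^{-2})$, the vanishing of $f$ on $\mathcal{N}_\delta(w/2)$, the bound \eqref{UniformeLimitatezza1} on $\partial\Omega$, Lemma \ref{lemma_maxmin}, and Lemma \ref{lemma_bound_partial_B} applied at width $w/2$ with its implicit $O(1/w)$ dependence. The only structural difference is that the paper first invokes the maximum principle for $H(\nabla u_\delta)$ (Lemma \ref{lemma_MaxPrinc_Du_u}) on the subdomain $\Omega_\delta \setminus \mathcal{N}_\delta(w)$ to reduce to its boundary, and then brings in the $P$-function solely to handle the lateral pieces $\partial\mathcal{N}^\pm_\delta(w)$; you instead apply the $P$-function maximum principle on all of $\overline\Omega_\delta$ at once and read off the bound wherever $f=1$. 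Your route is a slight streamlining, since it dispenses with the separate appeal to Lemma \ref{lemma_MaxPrinc_Du_u}; the paper's route isolates more explicitly that the only ``new'' boundary piece requiring the $P$-function is $\partial\mathcal{N}^\pm_\delta(w)$.
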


\begin{proof}
The maximum principle for the gradient of $u_\delta$ (see Lemma \ref{lemma_MaxPrinc_Du_u}) yields that the maximum of $H(\nabla u)$ in $\overline\Omega_\delta \setminus \mathcal{N}_{\delta}(w)$ is attained on $\partial (\Omega_\delta \setminus \mathcal{N}_{\delta}(w))$.

From \eqref{UniformeLimitatezza1} and Lemma \ref{lemma_bound_partial_B}, we only need to prove uniform bounds for $H(\nabla u)$ on $\partial \mathcal{N}^{\pm}_{\delta}(w)$, where
\begin{equation}\label{Npm}
  \partial \mathcal{N}^{\pm}_{\delta}(w)= \partial \mathcal{N}_{\delta}(w)\cap \{|\QQ x'|=\pm w\} \,.
\end{equation}

Let $P$ be as in Theorem \ref{MaximumPrinciple} (see formula \eqref{pfunction}). From \eqref{f_1} we have that
$$
\max_{\partial\mathcal{N}^{\pm}_{\delta}(w)} H^2(\nabla u_{\delta})
 = \max_{\partial\mathcal{N}^{\pm}_{\delta}(w)}f(x) H^2(\nabla u_{\delta})\leq \max_{\partial\mathcal{N}^{\pm}_{\delta}(w)} P(x) \leq \max_{\overline \Omega_\delta} P(x) \,,
 $$
and Theorem \ref{MaximumPrinciple} implies that there exists a constant $\lambda_0 = O(w^{-2})$ such that \eqref{pfunction} satisfies the maximum principle for any $\lambda \geq \lambda_0$ and we obtain 
$$
\max_{\partial\mathcal{N}^{\pm}_{\delta}(w)} H^2(\nabla u_{\delta}) \leq \max_{\overline \Omega_\delta} P(x) = \max_{\partial \Omega_\delta} P(x) \,.
$$
Since $\|u_\delta\|_{C^0(\Omega_\delta)} \leq \|\varphi\|_{C^0(\partial \Omega)}$ (see Lemma \ref{lemma_maxmin}) and $\lambda_0=O(w^{-2})$ (see Theorem \ref{MaximumPrinciple}), we have that there exists a constant $C$ independent of $\delta$ and $w$ such that
\begin{equation*}
  P(x)= f(x) H^2(\nabla u_{\delta})+ \lambda u_{\delta}^2\leq f(x) H^2(\nabla u_{\delta}) + Cw^{-2},
\end{equation*}
and hence
\begin{equation*}
\max_{\partial\mathcal{N}^{\pm}_{\delta}(w)} H^2(\nabla u_{\delta})\leq \max_{\partial\Omega_{\delta}}P(x)\leq \max_{\partial\Omega_{\delta}} \left[f(x) H^2(\nabla u_{\delta}) \right]+ Cw^{-2}.
\end{equation*}
Since $f=0$ in $\mathcal{N}_{\delta}(w/2)$, from  \eqref{UniformeLimitatezza1} and Lemma \ref{lemma_bound_partial_B} we find  that there exists a constant $C$ independent on $\delta$ and $w$ such that
\begin{equation} \label{grad_bound_Pi_pm}
  \max_{\partial \mathcal{N}^{\pm}_{\delta}(w)} H\left(\nabla u_{\delta} \right)\leq \frac{C}{w},
\end{equation}
which completes the proof.
\end{proof}
%

%

We conclude this section by giving the relation between $u_\delta$ and $u_0$. We first notice that, from Lemma \ref{lemma_bound_Omega_neck} and \cite[Theorem 2]{Dibenedetto}, for any fixed $w>0$ we have that there exists $\alpha>0$ independent of $\delta$ such that
\begin{equation*}  
\|u_\delta\|_{C^{1,\alpha}(\mathcal{K})} \leq C \qquad \text{ for any compact set } \mathcal{K} \subset \Omega_\delta \setminus \overline{\mathcal{N}}_{\delta}(w) \,,
\end{equation*}
where $C$ is a constant independent of $\delta$. This implies that $u_\delta$ converges to $u_0$ in $C^{1,\alpha}$ outside the neck, as shown in the following proposition. 

\begin{proposition} \label{prop_udelta_conv_u0}
Let $u_{\delta}$ be the solution of \eqref{DpH} and $u_0$ be the solution of \eqref{PH0}.

There exists a constant $0<\alpha <1 $ not depending on $\delta$ such that
\begin{equation} \label{u_delta_to_u_0}
  \lim_{\delta\rightarrow 0}\|u_{\delta} -u_{0}\|_{C^{1,\alpha}(E)}=0,
\end{equation}
for any compact set $E \subset \Omega_0$. Moreover, for any $i = 1, 2$ and for any neck $\mathcal{N}_\delta(w)$ of (sufficiently small) width $w$ we have
\begin{equation} \label{ferragosto}
   \lim_{\delta\rightarrow 0} \displaystyle\int_{\partial D^i_{\delta}\setminus \partial \mathcal{N}_{\delta}(w)} H^{p-1}\left(\nabla u_{\delta}\right)\nabla_{\xi}H\left(\nabla u_{\delta}\right)\cdot \nu ds= \displaystyle\int_{\partial D^i_0\setminus \partial \mathcal{N}_{\delta}(w)} H^{p-1}\left(\nabla u_{0}\right)\nabla_{\xi}H\left(\nabla u_{0}\right)\cdot \nu ds \,.
\end{equation}
\end{proposition}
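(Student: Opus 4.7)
The plan is a compactness-and-uniqueness argument that takes as input the uniform $C^{1,\alpha}$-bounds already established outside the neck.

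First I would fix a compact set $E \subset \Omega_0$ and pick $w = w(E) > 0$ small enough that $E \subset \overline{\Omega}_\delta \setminus \mathcal{N}_\delta(w)$ for every sufficiently small $\delta$; Lemma \ref{lemma_bound_Omega_neck} then yields $\|H(\nabla u_\delta)\|_{L^\infty(E)} \leq C/w$, uniformly in $\delta$. Combined with \cite[Theorem 2]{Dibenedetto} (and its boundary version, applied near $\partial D_\delta^i \setminus \partial\mathcal{N}_\delta(w)$, where the boundary datum $\mathcal{U}_\delta^i$ is constant and $\|u_\delta\|_{L^\infty} \leq \|\varphi\|_{L^\infty(\partial\Omega)}$ by Lemma \ref{lemma_maxmin}), this gives a uniform bound on $u_\delta$ in $C^{1,\alpha}(E)$. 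A diagonal exhaustion of $\Omega_0$ by such compact sets and Arzel\`a--Ascoli produce, along any sequence $\delta_k \to 0^+$, a further subsequence converging in $C^{1,\beta}_{\rm loc}(\Omega_0)$, for some $0 < \beta < \alpha$, to some limit $u^\ast$.

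Next I would identify $u^\ast$ with $u_0$ by showing that $u^\ast$ solves \eqref{PH0} and then invoking uniqueness. The weak equation $\Delta_p^H u^\ast = 0$ in $\Omega_0$ follows by passing to the limit in the weak formulation against any $\phi \in C_c^1(\Omega_0)$. Since the inclusions lie at distance at least $K$ from $\partial\Omega$ by \eqref{dist_B1B2_K}, the convergence is $C^{1,\beta}$ in a full neighbourhood of $\partial\Omega$ (boundary $C^{1,\alpha}$-regularity with smooth datum $\varphi$), giving $u^\ast = \varphi$ there; the constancy of $u^\ast$ on each $\overline{D_0^i}$ follows from $u_\delta \equiv \mathcal{U}_\delta^i$ on $\overline{D_\delta^i}$, together with the Hausdorff convergence $\partial D_\delta^i \to \partial D_0^i$ and the uniform bound $|\mathcal{U}_\delta^i| \leq \|\varphi\|_{L^\infty(\partial\Omega)}$. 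The third condition in \eqref{PH0} is the subtle point: applying the divergence theorem to $\Delta_p^H u_\delta = 0$ in $\Omega_\delta$ and using the two vanishing fluxes in \eqref{DpH}, one obtains
$$\int_{\partial\Omega} H^{p-1}(\nabla u_\delta)\nabla_\xi H(\nabla u_\delta)\cdot\nu\, ds = 0$$
for every $\delta > 0$; this passes to the limit thanks to the $C^{1,\beta}$ convergence in a neighbourhood of $\partial\Omega$, so the analogous quantity for $u^\ast$ vanishes; reapplying the divergence theorem in $\Omega_0$ then yields $\sum_i \int_{\partial D_0^i} H^{p-1}(\nabla u^\ast)\nabla_\xi H(\nabla u^\ast)\cdot\nu\, ds = 0$. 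A uniqueness argument for \eqref{PH0}, patterned on the one in Section \ref{Basic notations and preliminary results}, then forces $u^\ast = u_0$ and the entire family to converge.

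The second claim \eqref{ferragosto} is essentially a corollary: the surface $\partial D_\delta^i \setminus \partial\mathcal{N}_\delta(w)$ is a translate of $\partial D_0^i \setminus \partial\mathcal{N}_0(w)$ by a vector of size $O(\delta)$ along the $x_N$-axis (the two Wulff shapes being rigid), so parametrising both over $\partial D_0^i$ and using the $C^{1,\beta}$ convergence on an open neighbourhood of this arc reduces the claim to continuity of the integrand under uniform convergence. The most delicate point of the whole proof is the verification of the third condition in \eqref{PH0}: I deliberately avoid trying to take pointwise limits of the vanishing-flux conditions on each $\partial D_\delta^i$ separately, since that would require controlling $u_\delta$ inside the neck where the gradient may blow up, and instead transfer the flux identity to $\partial\Omega$, which lies uniformly far from the blow-up region.
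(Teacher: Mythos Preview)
The paper itself omits the proof entirely, deferring to \cite[Proposition~4.5]{CiraoloSciammetta} and \cite[Proposition~2.1]{GorbNovikov}; your compactness-and-uniqueness argument is the standard route for this type of statement and is essentially what those references contain. Your treatment of the third (sum-of-fluxes) condition in \eqref{PH0}---transferring the identity to $\partial\Omega$ via the divergence theorem, where the $C^{1,\beta}$ convergence is unproblematic, rather than attempting to pass to the limit on each $\partial D_\delta^i$ separately---is the right idea and correctly avoids the neck.

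One minor technical point is worth making explicit. When you ``reapply the divergence theorem in $\Omega_0$'' for the limit $u^\ast$, you need $u^\ast \in W^{1,p}(\Omega_0)$ globally, not just $C^{1,\beta}_{\rm loc}(\Omega_0)$, since the touching point is excluded from every compact set you control; this follows from the uniform energy bound $\int_{\Omega_\delta} H^p(\nabla u_\delta)\,dx \leq C$ (compare $u_\delta$ with any fixed $\delta$-independent competitor in $\mathcal{A}$) together with weak $W^{1,p}$-compactness. Alternatively, one can bypass the Euler--Lagrange verification altogether and argue directly at the level of the functional: lower semicontinuity gives $I_\infty[u^\ast] \leq \liminf I_\infty[u_\delta]$, while any competitor for \eqref{PH0} can be perturbed into a competitor for \eqref{energia} with nearly the same energy, so $u^\ast$ must minimise the limit problem and hence equal $u_0$ by strict convexity. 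Either route closes the argument.
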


\begin{proof}
The proof is analogous to the one of \cite[Proposition 4.5]{CiraoloSciammetta} (see also  \cite[Proposition 2.1]{GorbNovikov}), and we prefer to omit the details.
\end{proof}

\section{Proof of Theorem \ref{thm_main_1}}\label{sect_thm1}
\begin{lemma}\label{lemmalimite}
Let $\delta,w>0$ and let $\mathcal{R}_0$ and $\mathcal{N}_\delta(w)$ be given by \eqref{R0_def} and \eqref{neck}, respectively.  Let $u_\delta$ be the solution to \eqref{DpH} and define
\begin{equation}\label{I_1_def}
I_\delta(w)= \int_{\partial D_\delta^1 \cap \partial  \mathcal{N}_{\delta}(w)} H^{p-1}\left(\nabla u_{\delta}\right) \nabla_{\xi} H\left(\nabla u_{\delta}\right) \cdot \nu ds  \,.
\end{equation}
There exists $C>0$ independent of $\delta$ and $w$ such that
\begin{equation}\label{I1R0}
\lim_{\delta \to 0} |I_\delta(w) - \mathcal{R}_0| \leq C w^{N-1} \,.
\end{equation}
\end{lemma}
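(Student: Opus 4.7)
My approach combines three ingredients already established in the paper: the vanishing total flux condition for $u_\delta$ on $\partial D^1_\delta$ (the third condition in \eqref{DpH}), the $C^{1,\alpha}$-convergence of $u_\delta$ to $u_0$ outside the neck provided by Proposition \ref{prop_udelta_conv_u0} (more precisely, the flux convergence \eqref{ferragosto}), and the uniform bound \eqref{u0_W1infty} on $\nabla u_0$. The picture is that the flux $I_\delta(w)$ through the piece of $\partial D^1_\delta$ inside the neck can be rewritten, via zero total flux, as (plus or minus) the flux through the complementary, exterior piece; on that exterior region the flux of $u_\delta$ converges to the flux of $u_0$, and the remaining discrepancy with $\mathcal{R}_0$ is just the $u_0$-flux over a small surface cap whose area scales as $w^{N-1}$.

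Concretely, starting from $0 = \int_{\partial D^1_\delta} H^{p-1}(\nabla u_\delta) \nabla_\xi H(\nabla u_\delta) \cdot \nu \, ds$ and splitting the integration domain into $\partial D^1_\delta \cap \partial \mathcal{N}_\delta(w)$ and $\partial D^1_\delta \setminus \partial \mathcal{N}_\delta(w)$, one gets, up to the sign determined by the orientation convention for $\nu$ in the definition of $I_\delta(w)$,
\begin{equation*}
I_\delta(w) = \pm \int_{\partial D^1_\delta \setminus \partial \mathcal{N}_\delta(w)} H^{p-1}(\nabla u_\delta) \nabla_\xi H(\nabla u_\delta) \cdot \nu \, ds .
\end{equation*}
Applying \eqref{ferragosto} and using the obvious fact that $\partial \mathcal{N}_\delta(w) \to \partial \mathcal{N}_0(w)$ as $\delta \to 0^+$ (from the explicit definition \eqref{neck}), then adding and subtracting $\mathcal{R}_0$ via \eqref{R0_def} gives
\begin{equation*}
\lim_{\delta \to 0^+} I_\delta(w) - \mathcal{R}_0 = \mp \int_{\partial D^1_0 \cap \partial \mathcal{N}_0(w)} H^{p-1}(\nabla u_0) \nabla_\xi H(\nabla u_0) \cdot \nu \, ds .
\end{equation*}
This reduces the claim \eqref{I1R0} to estimating this cap integral.

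To bound the cap integral I would combine the uniform $L^\infty$-estimate \eqref{u0_W1infty}, which makes the integrand bounded by a constant independent of $w$, with a geometric surface-area estimate. The cap $\partial D^1_0 \cap \partial \mathcal{N}_0(w)$ is the intersection of the smooth Wulff shape $\partial D^1_0$ with the anisotropic cylinder $\{|\mathcal{Q}^{1/2} x'| < w\}$ around the $x_N$-axis; near the touching point $R_1 \hat P$ the Wulff shape is a smooth graph over its tangent hyperplane, and the cross-section $\{x' \in \mathbb{R}^{N-1} : |\mathcal{Q}^{1/2} x'| < w\}$ is an ellipsoid in $\mathbb{R}^{N-1}$ of Lebesgue measure comparable to $w^{N-1}/\sqrt{\det \mathcal{Q}}$. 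Consequently the $(N-1)$-dimensional area of the cap is $O(w^{N-1})$, so the cap integral is bounded by $C w^{N-1}$, which yields \eqref{I1R0}.

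I do not anticipate a serious obstacle, since all the hard analytic work (zero-flux condition, $C^{1,\alpha}$-convergence outside the neck, uniform gradient bound for $u_0$) has already been done. The whole proof is essentially a divergence-theorem bookkeeping argument plus the elementary geometric observation about the $w^{N-1}$-scaling of the neck cross-section. The only minor care is the choice of orientation for $\nu$ on the common boundary $\partial D^1_\delta \cap \partial \mathcal{N}_\delta(w)$, but this sign issue is absorbed by the absolute value in \eqref{I1R0}.
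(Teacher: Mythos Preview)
Your proposal is correct and follows essentially the same skeleton as the paper's proof: use the zero total flux condition on $\partial D^1_\delta$ to convert $I_\delta(w)$ into the flux over $\partial D^1_\delta \setminus \partial\mathcal{N}_\delta(w)$, pass to the limit $\delta\to 0$ on that exterior piece, and bound the leftover cap integral by $Cw^{N-1}$ using \eqref{u0_W1infty} and the obvious area scaling.

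The one difference worth flagging is in the middle step. The paper does not invoke \eqref{ferragosto} directly; instead it introduces a fixed auxiliary domain $E$ with $D^1_\delta\subset E$, $D^2_\delta\subset\Omega\setminus E$ and $\partial E\cap\mathcal{N}_\delta(w)\subset\partial D^1_0$, then applies the divergence theorem in $E\setminus(\mathcal{N}_\delta(w)\cup D^1_\delta)$. This produces an extra lateral flux term on $E\cap(\partial\mathcal{N}^+_\delta(w)\cup\partial\mathcal{N}^-_\delta(w))$, which the paper controls by the bound $H(\nabla u_\delta)\leq C/w$ from Lemma~\ref{lemma_bound_Omega_neck} (giving a contribution $O(\delta/w^{p-1})$ that vanishes in the limit), and then uses the $C^{1,\alpha}$-convergence of Proposition~\ref{prop_udelta_conv_u0} on the fixed surface $\partial E$. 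Your route is more economical: since \eqref{ferragosto} is already stated as part of Proposition~\ref{prop_udelta_conv_u0}, you may quote it as a black box and skip the auxiliary set and the lateral flux estimate entirely. Both arguments rely on the same analytic inputs; yours just packages them more efficiently. Your remark about the $\pm$ sign is appropriate---the paper's own proof has the same orientation ambiguity (compare \eqref{843} with the displayed identity for $I_\delta$ a few lines later), and the conclusion is stated with an absolute value.
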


\begin{proof}
Since $u_\delta$ is the solution to \eqref{DpH}, the divergence theorem yields
\begin{equation} \label{843}
I_\delta (w)= \int_{\partial D_\delta^1} H^{p-1}\left(\nabla u_{\delta}\right) \nabla_{\xi} H\left(\nabla u_{\delta}\right) \cdot \nu ds  -  \int_{\partial D_\delta^1 \setminus \partial  \mathcal{N}_{\delta}(w)} H^{p-1}\left(\nabla u_{\delta}\right) \nabla_{\xi} H\left(\nabla u_{\delta}\right) \cdot \nu ds \,.
\end{equation}
We introduce a smooth auxiliary set $E$ containing $D^1_\delta$ and not containing $D^2_\delta$ such that $\partial E$ coincides with $\partial D_0^1$ in a neck of fixed width $w_0>0$. More precisely, $E$ is such that $D_\delta^1 \subset E$, $D_\delta^2 \subset \Omega \setminus E$ for any $\delta \geq 0$,  and  $\partial E \cap \mathcal{N}_{\delta}(w)  \subset \partial D_0^1$ for $w \leq w_0$. For simplicity of notation, we set
$$
\partial E^1(w) := \partial E \cap \mathcal{N}_{\delta}(w) \quad  \text{ and } \quad \partial E^2(w) := \partial E \setminus \mathcal{N}_{\delta}(w) \,.
$$
Since $H(\nabla u_0)$ is uniformly bounded in $\Omega_0$ (see \ref{u0_W1infty}), then
\begin{equation} \label{zanzara}
\Big{|} \int_{\partial E^1(w)} H^{p-1}(\nabla u_0)  \nabla_{\xi} H\left(\nabla u_{0}\right) \cdot \nu ds - \mathcal{R}_0 \Big{|} \leq C w^{N-1} \,,
\end{equation}
for some constant $C$ independent of $\delta$ and $w$.

By using that $\Delta_p^H u_\delta = 0$ and applying the divergence theorem in $E \setminus (  \mathcal{N}_{\delta}(w) \cup D_\delta^1)$ we obtain
\begin{multline*}
\int_{\partial E^1(w)} H^{p-1}\left(\nabla u_{\delta}\right) \nabla_{\xi} H\left(\nabla u_{\delta}\right) \cdot \nu ds \\
= \int_{E \cap \left(\partial\mathcal{N}^{+}_{\delta}(w))\cup \partial\mathcal{N}^{-}_{\delta}(w))\right)} H^{p-1}\left(\nabla u_{\delta}\right) \nabla_{\xi} H\left(\nabla u_{\delta}\right) \cdot \nu ds \\
+ \int_{\partial D_\delta^1 \setminus \partial  \mathcal{N}_{\delta}(w)} H^{p-1}\left(\nabla u_{\delta}\right) \nabla_{\xi} H\left(\nabla u_{\delta}\right) \cdot \nu ds \,,
\end{multline*}
where $\mathcal{N}^{\pm}_{\delta}(w)$ are defined by \eqref{Npm}.  Lemma \ref{lemma_bound_Omega_neck} yields
$$
\Big{|} \int_{E \cap \left(\partial\mathcal{N}^{+}_{\delta}(w))\cup \partial\mathcal{N}^{-}_{\delta}(w))\right)} H^{p-1}\left(\nabla u_{\delta}\right) \nabla_{\xi} H\left(\nabla u_{\delta}\right) \cdot \nu ds \Big{|} \leq \frac{C}{w^{p-1}} \delta \,;
$$
the third condition in \eqref{DpH} implies
$$
I_\delta = \int_{\partial D_\delta^1 \setminus \partial  \mathcal{N}_{\delta}(w)} H^{p-1}\left(\nabla u_{\delta}\right) \nabla_{\xi} H\left(\nabla u_{\delta}\right) \cdot \nu ds \,.
$$
Hence
\begin{equation} \label{zanza_morta}
\Big{|} I_\delta - \int_{\partial E^1(w)} H^{p-1}\left(\nabla u_{\delta}\right) \nabla_{\xi} H\left(\nabla u_{\delta}\right) \cdot \nu ds \Big{|} \leq \frac{C}{w^{p-1}} \delta \,.
\end{equation}
From \eqref{zanzara}, \eqref{zanza_morta} and Proposition \ref{prop_udelta_conv_u0} we obtain \eqref{I1R0}.
\end{proof}

Let $P \in \partial D_\delta^1 \cap \partial \mathcal{N}_\delta(w)$. Let $B_{H_0}(y_0,r)$ be a ball of center $y_0 \in B_\delta^1$ and radius $r$. We shall make use of the following values of the radius $r$. Let $r_1$ be such that $B_{H_0}(y_0,r)$ is tangent to $\partial D_\delta^1$ at $P$ from the inside, and denote by $r_2$ the radius of the concentric touching ball tangent to $\partial D_\delta^2$ from the outside. Analogously, let $B_{H_0}(z_0,\rho)$ be a ball of center $z_0 \in B_\delta^2$ and radius $\rho$. The radius $\rho$ will be chosen to be $\rho_1$ or $\rho_2$, which are defined as follows.
Let  $\rho_2$ be such that $B_{H_0}(z_0,\rho)$  touches  $\partial D_\delta^1$ at $P$ form the outside, and let $\rho_1$ be the radius of the concentric ball touching $\partial D_\delta^2$ from the inside (see Figure \ref{r1r1pho1pho2}). In the following lemma we establish pointwise bounds on the quantity inside $I_\delta$ in terms of $r_1,r_2,\rho_1$ and $\rho_2$. We assume that $\mathcal{U}_\delta^1 - \mathcal{U}_\delta^2 \geq 0$; the case $\mathcal{U}_\delta^1 - \mathcal{U}_\delta^2\leq 0$ is completely analogous.
\begin{center}
	\begin{figure}[htpb]
		\includegraphics[scale=0.5]{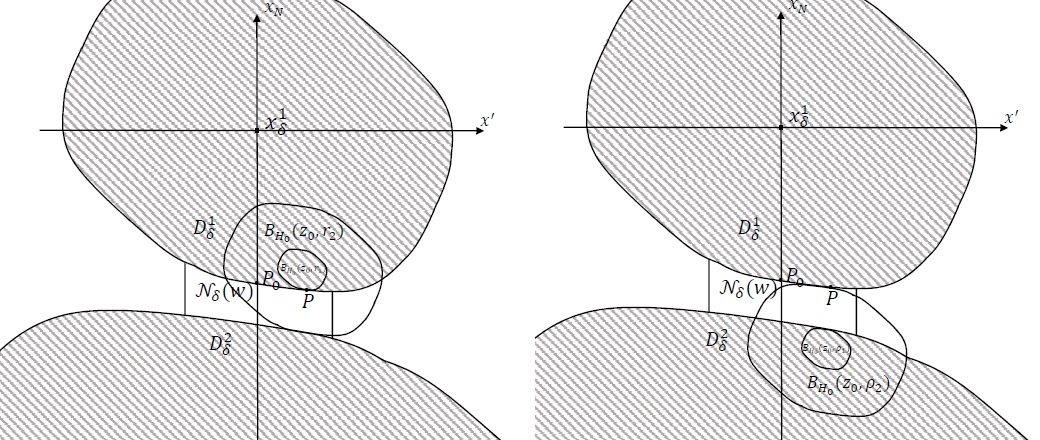}\label{r1r1pho1pho2}
		\caption{The choice of $r_1,r_2,\rho_1,\rho_2$.}
	\end{figure}
\end{center}
\begin{lemma} \label{lemma52}
Let assume that $\mathcal{U}^1_{\delta} \geq \mathcal{U}^2_{\delta}$. There exists $C>0$ independent of $\delta$ such that
\begin{equation} \label{lower_upper_bound}
 - \left(\dfrac{\mathcal{U}^1_{\delta}-\mathcal{U}^2_{\delta}}{\rho_2-\rho_1} \right)^{p-1}-  C  \leq H^{p-1}(\nabla u_\delta(P)) \nabla_\xi H(\nabla u_\delta (P)) \cdot \nu (P) \leq  - \left(\dfrac{\mathcal{U}^1_{\delta}-\mathcal{U}^2_{\delta}}{r_2-r_1} \right)^{p-1}+  C
\end{equation}
holds for any $P \in \partial D_\delta^1 \cap \partial \mathcal{N}_\delta(w)$.
\end{lemma}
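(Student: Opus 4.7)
The plan is to prove each inequality in \eqref{lower_upper_bound} by comparison with the explicit $\Delta^H_p$-harmonic barriers on a Wulff-shape annulus provided by \eqref{v_anello}.

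For the upper bound (equivalently, a lower bound on the magnitude of the flux at $P$), consider the radial barrier $\overline v$ defined on the annulus $A_1 := B_{H_0}(y_0,r_2) \setminus \overline{B_{H_0}(y_0,r_1)}$ via formula \eqref{v_anello}, with inner datum $\mathcal{U}^1_\delta$ and outer datum $\mathcal{U}^2_\delta$. By construction $\overline v$ is $H_0$-radial and monotonically decreasing in $H_0(x-y_0)$, and it matches $u_\delta$ at the inner tangency point $P\in\partial B_{H_0}(y_0,r_1)\cap\partial D^1_\delta$ and at the outer tangency point $Q\in\partial B_{H_0}(y_0,r_2)\cap\partial D^2_\delta$. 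Extending $\overline v$ by the constant $\mathcal{U}^1_\delta$ on $B_{H_0}(y_0,r_1)\subset\overline{D^1_\delta}$ yields a continuous weak super-solution of $\Delta^H_p$. Applying the comparison principle for the Finsler $p$-Laplacian (\cite[Lemma 2.3]{BC}) on $B_{H_0}(y_0,r_2)\cap(\Omega_\delta\cup \overline{D^1_\delta})$, one obtains $u_\delta \leq \overline v + C_0$ with $C_0$ independent of $\delta$; here $C_0$ absorbs the possible discrepancy between $u_\delta$ and the outer datum $\mathcal{U}^2_\delta$ on $\partial B_{H_0}(y_0,r_2)\cap\Omega_\delta$, which is uniformly controlled via Lemmas \ref{lemma_maxmin} and \ref{lemma_bound_partial_B}.

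The decisive analytic step is to differentiate \eqref{v_anello} at $P$ and Taylor-expand in $r_2-r_1$. For $1<p<N$ (and analogously when $p=N$, using the logarithmic formula), a direct computation yields
\begin{equation*}
H(\nabla\overline v(P)) = \frac{\mathcal{U}^1_\delta-\mathcal{U}^2_\delta}{r_2-r_1}\bigl(1+O(r_2-r_1)\bigr),
\end{equation*}
uniformly as $r_1,r_2$ vary in a compact subset of $(0,\infty)$. Because $\overline v$ is $H_0$-radial and the Wulff sphere $\partial B_{H_0}(y_0,r_1)$ shares its outward normal with $\partial D^1_\delta$ at $P$, the vector $\nabla\overline v(P)$ is anti-parallel to $\nu(P)$; together with \eqref{nabla_H_zero_om} and \eqref{condizionescalare}, this gives
\begin{equation*}
H^{p-1}\bigl(\nabla\overline v(P)\bigr)\,\nabla_\xi H\bigl(\nabla\overline v(P)\bigr)\cdot\nu(P) = -\Bigl(\tfrac{\mathcal{U}^1_\delta-\mathcal{U}^2_\delta}{r_2-r_1}\Bigr)^{p-1} + O(1).
\end{equation*}
Combining with a Hopf-type boundary estimate at $P$, which propagates the bulk inequality $u_\delta\leq\overline v+C_0$ into a bound on the respective flux quantities (up to an additive $O(1)$ error absorbing $C_0$), yields the upper bound in \eqref{lower_upper_bound}.

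The lower bound is obtained by the symmetric construction, interchanging the roles of $D^1_\delta$ and $D^2_\delta$: take $\underline v$ to be the radial solution \eqref{v_anello} on the annulus $B_{H_0}(z_0,\rho_2)\setminus\overline{B_{H_0}(z_0,\rho_1)}$ with $z_0\in D^2_\delta$, inner datum $\mathcal{U}^2_\delta$ and outer datum $\mathcal{U}^1_\delta$. Now the comparison direction is reversed, $\underline v -C_0\leq u_\delta$, the Hopf-type argument at $P$ gives the opposite normal-derivative inequality, and the analogous explicit computation produces $\rho_2-\rho_1$ in place of $r_2-r_1$. The main obstacle is making the boundary discrepancy on the outer (resp.\ inner) Wulff sphere uniformly controlled in $\delta$ and in the choice of $P\in\partial D^1_\delta\cap\partial\mathcal N_\delta(w)$; this is exactly where the uniform $L^\infty$ bound of Lemma \ref{lemma_maxmin} and the uniform gradient bound outside the neck of Lemma \ref{lemma_bound_partial_B} are used, and it is what produces the additive constant $C$ in \eqref{lower_upper_bound}.
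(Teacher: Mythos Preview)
Your framework—comparing $u_\delta$ with the explicit radial barriers \eqref{v_anello} on Wulff annuli and differentiating at $P$ via the mean value theorem—is exactly the paper's, and your gradient computation $H(\nabla\overline v(P))=\tfrac{\mathcal U^1_\delta-\mathcal U^2_\delta}{r_2-r_1}(1+O(r_2-r_1))$ matches the paper's.

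The gap is in how you produce the additive constant $C$. You assert that the comparison principle yields $u_\delta\le\overline v+C_0$ and that a ``Hopf-type boundary estimate'' then transfers this to the flux at $P$ with an additive $O(1)$ error. This step does not work. From $u_\delta\le\overline v+C_0$ together with $u_\delta(P)=\overline v(P)$ you cannot conclude $\partial_\nu u_\delta(P)\le\partial_\nu\overline v(P)+O(1)$: the Hopf lemma requires the difference to attain its extremum at $P$, and here $u_\delta-\overline v$ equals $0$ at $P$ but is only known to be $\le C_0$, not $\le 0$, nearby. If instead you absorb $C_0$ into the outer boundary datum of the barrier so that the comparison becomes exact, the resulting gradient at $P$ acquires an extra term of size $C_0/(r_2-r_1)$; since $r_2-r_1$ can be of order $\delta$ for $P$ near the center of the neck, this error is not $O(1)$.

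The paper circumvents this with a dichotomy on the quantity $\mathcal M$ of \eqref{M} (essentially $\tfrac{\mathcal U^1_\delta-\mathcal U^2_\delta}{r_2-r_1}$). When $|\mathcal M|>M$ for a fixed threshold $M$, the barrier comparison can be made to yield \eqref{artista} directly, with no additive error. When $|\mathcal M|\le M$, one has $|\mathcal U^1_\delta-\mathcal U^2_\delta|\le C(r_2-r_1)$; since $r_2-r_1$ is comparable to the distance from $P$ to $\partial D^2_\delta$, the rescaled problem has bounded oscillation at unit scale, and interior $C^{1,\alpha}$ estimates for the degenerate operator \cite{Dibenedetto} give $H(\nabla u_\delta(P))\le C$ uniformly in $\delta$. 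The additive $C$ in \eqref{lower_upper_bound} comes \emph{entirely} from this second case; it is not a residual boundary discrepancy in the barrier argument. Your proposal is missing this regularity alternative.
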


\begin{proof}
Let $\underline v$ be given by
\begin{equation*}
\underline{v}(x)=
\begin{cases}
(\mathcal{U}^1_{\delta}-\mathcal{U}^2_\delta)\dfrac{H_0(x-y_0)^{\frac{p-N}{p-1}}-r_2^{\frac{p-N}{p-1}}}{r_1^{\frac{p-N}{p-1}}-r_2^{\frac{p-N}{p-1}}}+\mathcal{U}^2_\delta & \text{if } 1<p<N\,, \\
 & \\
\displaystyle(\mathcal{U}^1_{\delta}-\mathcal{U}^2_\delta)\dfrac{\ln (r_2^{-1} H_0(x-y_0))}{\ln (r_2^{-1}r_1 )}+\mathcal{U}^2_\delta  & \text{if }  N=p\,.
\end{cases}
\end{equation*}
Notice that $\Delta^H_p \underline v=0$ in $\mathbb{R}^N \setminus \{y_0\}$ and $\underline v = \mathcal{U}_\delta^i$ on $\partial B_{H_0}(y_0,r_i)$, $i=1,2$. Let
\begin{equation}
\mathcal{M} = \begin{cases}\label{M}
\dfrac{\mathcal{U}^1_{\delta}-\mathcal{U}^2_\delta}{r_1^{\frac{p-N}{p-1}}-r_2^{\frac{p-N}{p-1}}} & \text{if } 1<p<N\,, \\
 & \\
\dfrac{\mathcal{U}^1_{\delta}-\mathcal{U}^2_\delta}{\ln (r_2 r_1^{-1} )}  & \text{if }  N=p\,,
\end{cases}
\end{equation}
and observe that $\mathcal{M}>0$ for $1<p\leq N$ and $\mathcal{M}<0$ for $p>N$.

We notice that we can find a constant $M$, not depending on $\delta$, such that if $|\mathcal{M}|>M$, then $\underline v$ is a lower barrier for $u_\delta$. In this case, since
\begin{equation*}
H(\nabla\underline{v}(P))=
\begin{cases}
 \dfrac{\mathcal{U}^1_{\delta}-\mathcal{U}^2_{\delta}}{ r_1^{\frac{p-N}{p-1}}-r_2^{\frac{p-N}{p-1}}  }\dfrac{N-p}{p-1}  r_1^{\frac{1-N}{p-1}}& \text{if } 1<p<N\,, \\
 & \\
\dfrac{\mathcal{U}^1_{\delta}-\mathcal{U}^2_{\delta}}{\ln(r_2r_1^{-1})}\dfrac{1}{r_1}  & \text{if }  N=p\,,
\end{cases}
\end{equation*}
from the mean value theorem we have that there exists $\bar r \in (r_1,r_2)$ such that
\begin{equation*}
H(\nabla\underline{v}(P)) =
\dfrac{\mathcal{U}^1_{\delta}-\mathcal{U}^2_{\delta}}{r_2-r_1} \left( \frac{r_1}{\bar r} \right)^{\frac{1-N}{p-1}}
\end{equation*}
for any $p>1$, and hence
\begin{equation} \label{nabla_v_lower_bound}
H(\nabla\underline{v}(P)) \geq
\dfrac{\mathcal{U}^1_{\delta}-\mathcal{U}^2_{\delta}}{r_2-r_1} \,.
\end{equation}

Thanks to \eqref{nabla_v_lower_bound} we can give an upper bound on the quantity $H^{p-1}(\nabla u_\delta(P)) \nabla_\xi H(\nabla u_\delta (P)) \cdot \nu(P)$. Indeed, since $\underline v$ is a lower barrier for $u_\delta$ and $\partial D_\delta^1$ is a level line for $u_\delta$ then
\begin{equation}\label{-1}
\nabla_\xi H(\nabla u_\delta (P)) \cdot \nu(P) = - \nabla_\xi H(\nu(P) ) \cdot \nu(P) = - H(\nu(P) ) = -1 \,,
\end{equation}
where the last equality holds because $P$ lies on a Wulff shape. From \eqref{nabla_v_lower_bound} and \eqref{-1} we find
\begin{equation} \label{artista}
H^{p-1}(\nabla u_\delta(P)) \nabla_\xi H(\nabla u_\delta (P)) \cdot \nu (P) \leq - \left(\dfrac{\mathcal{U}^1_{\delta}-\mathcal{U}^2_{\delta}}{r_2-r_1}\right)^{p-1} \,.
\end{equation}
If $\mathcal{M} \leq M$, from elliptic estimates we have $H(\nabla u_\delta) \leq C$, where $C$ does not depends on $\delta$. Indeed, from the mean value theorem we have
$$
 \dfrac{\mathcal{U}^1_{\delta}-\mathcal{U}^2_{\delta}}{r_2-r_1} \leq M \dfrac{N-p}{p-1}r_1^{\frac{1-N}{p-1}} \,.
$$
Since $\partial D_\delta^1$ is of class $C^3$, $u_\delta$ is constant on $\partial D_\delta^1$, $|\mathcal{U}^1_{\delta}-\mathcal{U}^2_{\delta} | \leq C (r_2-r_1)$, and the distance of $P$ from $\partial D_\delta^2$ is of size $r_2-r_1$, from interior regularity estimates \cite{Dibenedetto} we have that  $H(\nabla u_\delta) \leq C$, where $C$ does not depends on $\delta$.

Hence
\begin{equation*}
H^{p-1}(\nabla u_\delta(P)) \nabla_\xi H(\nabla u_\delta (P)) \cdot \nu (P) \leq - \left(\dfrac{\mathcal{U}^1_{\delta}-\mathcal{U}^2_{\delta}}{r_2-r_1} \right)^{p-1}+  C\,,
\end{equation*}
which gives the upper bound in \eqref{lower_upper_bound}.

The lower bound in \eqref{lower_upper_bound} can be obtained by considering the function
$\overline v$ given by
\begin{equation*}
\overline{v}(x)=
\begin{cases}
-(\mathcal{U}^1_{\delta}-\mathcal{U}^2_\delta)\dfrac{H_0(x-\bar y)^{\frac{p-N}{p-1}}-\rho_2^{\frac{p-N}{p-1}}}{\rho_1^{\frac{p-N}{p-1}}-\rho_2^{\frac{p-N}{p-1}}}+\mathcal{U}^1_\delta & \text{if } 1<p<N\,, \\
 & \\
-\displaystyle(\mathcal{U}^1_{\delta}-\mathcal{U}^2_\delta)\dfrac{\ln (\rho_2^{-1} H_0(x-\bar y))}{\ln (\rho_2^{-1}\rho_1 )}+\mathcal{U}^1_\delta  & \text{if }  N=p\,
\end{cases}
\end{equation*}
in place of $\underline v$ and arguing as before, where now $\overline v$ serves as an upper barrier for $u_\delta$ (see also Proof of Step 1 of \cite{CiraoloSciammetta}).
\end{proof}

Let
 \begin{equation} \label{PsiNdelta}
\Psi_N(\delta)=
\begin{cases}
\delta^{-\frac{N+1}{2}+p} & p>\frac{N+1}{2} \,, \\
\left( \log\left(1/\delta\right)\right)^{-1}  &  p=\frac{N+1}{2} \,, \\
1 &  p<\frac{N+1}{2}\,.
\end{cases}
\end{equation}
We have the following lemma.

\begin{lemma}\label{lemma Idelta}
Let $w>0$ be fixed. Then
\begin{equation*}
- (1+\tau)(\mathcal{U}^1_{\delta}-\mathcal{U}^2_\delta)^{p-1}|\partial_{\xi_N} H_0(P_0)| ^{-1}\left(\frac{R_1+R_2}{2R_1 R_2}\right)^{-\frac{N-1}{2}}|\mathcal{Q}|^{-\frac{N-1}{2}} C_{N,p} \Psi^{-1}_N(\delta)(1+o(1))- C w^{N-1} \leq I_\delta(w),
\end{equation*}
and
\begin{equation*}
I_\delta(w) \leq -(1-\tau)(\mathcal{U}^1_{\delta}-\mathcal{U}^2_\delta)^{p-1}|\partial_{\xi_N} H_0(P_0)| ^{-1}\left(\frac{R_1+R_2}{2R_1 R_2}\right)^{-\frac{N-1}{2}}|\mathcal{Q}|^{-\frac{N-1}{2}} C_{N,p} \Psi^{-1}_N(\delta)(1+o(1))+ C w^{N-1},
\end{equation*}
as $\delta \to 0^+$.
\end{lemma}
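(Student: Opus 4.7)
The plan is to integrate the pointwise bounds of Lemma~\ref{lemma52} across $\partial D_\delta^1\cap\partial\mathcal{N}_\delta(w)$ and then extract the asymptotic behaviour of the resulting geometric integral. First I parameterize the strip near $P_0=R_1\hat P$ as a graph $x_N=\psi_1(x')$ over $|\mathcal{Q}^{1/2}x'|<w$, and similarly $\psi_2$ for $\partial D_\delta^2$ near its lowest point. Applying the implicit function theorem to $H_0=R_i$ and computing second derivatives at the origin---using \eqref{condizionescalare}, which forces $\nabla H_0(P_0)$ to be parallel to $e_N$---gives
$$\psi_i(x')=\psi_i(0)\mp\frac{|\partial_{\xi_N}H_0(P_0)|^{-1}}{2R_i}\,\mathcal{Q}x'\cdot x'+o(|x'|^2),$$
with opposite signs at the top of $D_\delta^1$ and the bottom of $D_\delta^2$. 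Since the ray from $y_0$ through $P$ is asymptotically aligned with $e_N$, the anisotropic gap $r_2-r_1$ equals $|\partial_{\xi_N}H_0(P_0)|\,(\psi_2-\psi_1)$ to leading order, so the Taylor curvature factor cancels and
$$r_2-r_1=\delta+\frac{R_1+R_2}{2R_1R_2}\,\mathcal{Q}x'\cdot x'+o(\delta+|x'|^2),$$
with the identical leading expression for $\rho_2-\rho_1$, and $ds=(1+o(1))\,dx'$ uniformly on the strip.

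Next I identify the surface integrand. Since $u_\delta$ is constant on $\partial D_\delta^1$, $\nabla u_\delta(P)$ is parallel to the outward Euclidean normal $\nu(P)$, so homogeneity of $H$ together with \eqref{condizionescalare} give $H^{p-1}(\nabla u_\delta)\nabla_\xi H(\nabla u_\delta)\cdot\nu=-H^{p-1}(\nabla u_\delta)\,H(\nu)$; by \eqref{LemmaCianchiSalani11} and the orientation of $\nabla H_0(P_0)$, $H(\nu(P))\to|\partial_{\xi_N}H_0(P_0)|^{-1}$ as $P\to P_0$. Inserting the two-sided bound of Lemma~\ref{lemma52} and splitting off the bounded error term (which integrates to $O(w^{N-1})$ since $\partial D_\delta^1\cap\partial\mathcal{N}_\delta(w)$ has area $\sim w^{N-1}$), I obtain
$$I_\delta(w)\le-(\mathcal U^1_\delta-\mathcal U^2_\delta)^{p-1}|\partial_{\xi_N}H_0(P_0)|^{-1}(1+o(1))\!\int_{|\mathcal{Q}^{1/2}x'|<w}\!\!\frac{dx'}{\bigl(\delta+\tfrac{R_1+R_2}{2R_1R_2}\mathcal{Q}x'\cdot x'\bigr)^{p-1}}+Cw^{N-1}$$
together with the corresponding lower bound in which $\rho_2-\rho_1$ replaces $r_2-r_1$.

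Finally, the anisotropic rescaling $y=\sqrt{\tfrac{R_1+R_2}{2R_1R_2}}\,\mathcal{Q}^{1/2}x'$ extracts the factors $\bigl(\tfrac{R_1+R_2}{2R_1R_2}\bigr)^{-(N-1)/2}|\mathcal{Q}|^{-(N-1)/2}$ and reduces the integral to $\int_{|y|<\rho}(\delta+|y|^2)^{-(p-1)}\,dy$ with $\rho=w\sqrt{A}$. Spherical coordinates together with the scaling $y=\sqrt\delta\,s$ then produce the three regimes of \eqref{PsiNdelta}: the convergent integral $\int_0^\infty s^{N-2}(1+s^2)^{-(p-1)}\,ds$ when $p>(N+1)/2$, a logarithm $\tfrac{1}{2}|\log\delta|$ when $p=(N+1)/2$, and a finite limit when $p<(N+1)/2$; all constants are packaged into $C_{N,p}$. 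For any prescribed $\tau\in(0,1/2]$, the $o(1)$ errors from the Taylor expansion and from the cutoff $|y|<\rho$ can be absorbed into the $(1\pm\tau)$ prefactors by first choosing $w$ small enough that the quadratic expansions are sharp and then letting $\delta\to0^+$. The main obstacle will be the careful bookkeeping of the factor $|\partial_{\xi_N}H_0(P_0)|^{-1}$, which enters both through the curvature normalization of $\psi_i$ (where it cancels when building $r_2-r_1$) and through the surface density $H(\nu)$ (where it survives), so that no spurious powers appear in the final constant.
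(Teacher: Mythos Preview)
Your plan coincides with the paper's proof: integrate the pointwise bounds of Lemma~\ref{lemma52} over $\partial D_\delta^1\cap\partial\mathcal N_\delta(w)$, replace $r_2-r_1$ and $\rho_2-\rho_1$ by their quadratic expansions in $x'$, parametrize the cap as a graph, and rescale to extract $\Psi_N(\delta)^{-1}$. The only structural difference is that the paper obtains the gap expansion by quoting \cite[Lemmas~B.1--B.2]{CiraoloSciammetta}, which already give $r_2-r_1,\ \rho_2-\rho_1\approx\delta+(1\pm\tau)\tfrac{R_1+R_2}{2R_1R_2}\,\mathcal Q x'\cdot x'$; thus in the paper $\tau$ enters at the gap-expansion stage rather than being used at the end to absorb $o(1)$ errors as you propose. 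Your self-contained Taylor computation of $\psi_1,\psi_2$ reaches the same leading terms.

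One genuine bookkeeping concern at exactly the spot you anticipated. Lemma~\ref{lemma52} bounds the \emph{full} integrand $H^{p-1}(\nabla u_\delta)\,\nabla_\xi H(\nabla u_\delta)\cdot\nu$, not $H^{p-1}(\nabla u_\delta)$ alone; so if you insert that bound directly there is no separate $H(\nu)$ factor left to pull out. Combined with your (correct) observation that $ds=(1+o(1))\,dx'$, this route produces \emph{no} $|\partial_{\xi_N}H_0(P_0)|^{-1}$ in your displayed inequality, contrary to what you wrote. The paper, by contrast, attributes this factor to the surface element, asserting $\sqrt{1+|\nabla_{x'}\phi|^2}=|\partial_{\xi_N}H_0(P_0)|^{-1}(1+o(1))$. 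Whichever route you take, the factor must appear exactly once; as written, your argument either double-counts it (if you multiply Lemma~\ref{lemma52} by $H(\nu)$) or omits it (if you use Lemma~\ref{lemma52} as stated together with $ds\approx dx'$). You should reconcile this by being explicit about which version of the pointwise bound you are invoking and where the anisotropy constant actually enters.
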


\begin{proof}
Let $c=(1\pm\tau) \frac{R_1+R_2}{2R_1 R_2}$. From \eqref{lower_upper_bound} and \cite[Lemma AppendixB.1]{CiraoloSciammetta} and \cite[Lemma AppendixB.2]{CiraoloSciammetta}, we have
\begin{eqnarray*}
 - \left(\dfrac{\mathcal{U}^1_{\delta}-\mathcal{U}^2_{\delta}}{\delta +  c \QQ P^\perp \cdot P^\perp } \right)^{p-1}(1+ o(\delta^2 + |P-P_0|^2) )-  C  \leq H^{p-1}(\nabla u_\delta(P)) \nabla_\xi H(\nabla u_\delta (P)) \cdot\nu (P)\\
  \leq  - \left(\dfrac{\mathcal{U}^1_{\delta}-\mathcal{U}^2_{\delta}}{\delta +  c \QQ P^\perp \cdot P^\perp } \right)^{p-1}(1+ o(\delta^2 + |P-P_0|^2) )+  C \,,
\end{eqnarray*}
where $\mathcal{Q}$ is defined by \eqref{matriceQ} and $P^\perp$ is the projection of $P$ over the orthogonal to $P_0$.
By integrating over $ \mathcal{I}= \partial D_\delta^1 \cap \partial  \mathcal{N}_{\delta}(w)$, we obtain
\begin{eqnarray}\label{integraleIdelta}
 - \left(\mathcal{U}^1_{\delta}-\mathcal{U}^2_{\delta} \right)^{p-1}\int\limits_{\mathcal{I}}\dfrac{1}{\left(\delta +  c \QQ P^\perp \cdot P^\perp\right)^{p-1} }d\sigma (1+ o(1))-  Cw^{N-1}  \leq I_{\delta}(w)\\
  \leq  - \left(\mathcal{U}^1_{\delta}-\mathcal{U}^2_{\delta} \right)^{p-1}\int\limits_{\mathcal{I}}\dfrac{1}{\left(\delta +  c \QQ P^\perp \cdot P^\perp\right)^{p-1} }d\sigma (1+ o(1)))+  Cw^{N-1}.\nonumber
\end{eqnarray}
Hence, we have to understand the asymptotic behaviour of
\begin{equation} \label{integralone1}
\hat I = \int\limits_{\mathcal{I}} \dfrac{d\sigma}{\left(\delta +  c \QQ P^\perp \cdot P^\perp\right)^{p-1} }
\end{equation}
as $\delta \to 0$.

Since $\{x_N = 0\}$ is the orthogonal to $P^0$, we write $P^\perp$ as $P^\perp =(x',x_N)$, where $ x'=(x_1,\ldots,x_{N-1})$. The implicit function theorem guarantees that there exists a function $\phi : \{|\QQ^{1/2} x'|<w\} \to \mathbb{R}$ such that $H_0(x',\phi(x'))=R_1$, $\phi(0) = \delta$ and $(x',\phi(x')) \in \mathcal I$. Hence \eqref{integralone1} becomes
$$
\hat I =  \int\limits_{\{|\QQ^{1/2} x'|< w\}} \dfrac{ \sqrt{1+|\nabla_{x'} \phi(x')|^2}}{\left(\delta +  c \mathcal{Q} x' \cdot x'\right)^{p-1} } dx' \,.
$$
We recall that the point  $P=(x',\phi(x'))$ lies on a Wulff shape, which implies that
$$
1+|\nabla_{x'} \phi(x')|^2 =  \frac{1}{(\partial_{\xi_N} H_0(P_0))^2}(1+o(x')) \,,
$$
as $x'\to 0$ and, by a change of variables, we find
\begin{equation*}
\hat I =\left|\partial_{\xi_N} H_0(P_0)\right|^{-1} \delta^{\frac{N+1}{2}-p}\left(c\,\,|\mathcal{Q}|\right)^{-\frac{N-1}{2}} \int_{\big\{ |y|<\left(\frac{c}{\delta}\right)^{\frac{1}{2}}w \big\}} \dfrac{|y|^{N-2}}{(1 +  |y|^2)^{p-1} }dy \,.
\end{equation*}
Tedious by standard calculations show that 
$$
\lim_{\delta \to 0^+} \delta^{\frac{N+1}{2}-p}\Psi_N(\delta) \int_{\big\{ |y|<\left(\frac{c}{\delta}\right)^{\frac{1}{2}}w \big\}} \dfrac{|y|^{N-2}}{(1 +  |y|^2)^{p-1}} dy=K_{N,p}\,,
$$
where $\Psi_N(\delta)$ is given by \eqref{PsiNdelta} and $K_{N,p}$ is a constant which depends only on $N$ and $p$ and can be explicitly computed.
Hence
\begin{equation}\label{integraleIcappelleto}
 \widehat{I}= \left(c\,\,|\mathcal{Q}|\right)^{-\frac{N-1}{2}}K_{N,p} \left|\partial_{\xi_N} H_0(P_0)\right|^{-1}\Psi_N^{-1}(\delta)(1+o(1))
\end{equation}
as $\delta \to 0^+$.
From \eqref{integraleIdelta} and \eqref{integraleIcappelleto} we obtain the assertion.
\end{proof}

\noindent In the following Proposition we give upper and lower bounds on the difference of potential $\mathcal{U}_\delta^1 - \mathcal{U}_\delta^2$.

\begin{proposition}\label{proposizione_diffU1U2}
For any fixed $\tau \in (0,1/2)$ we have that
\begin{equation} \label{diff_U1U2}
(1-\tau) C_*  \Psi_N(\delta) (1 + o(1)) \leq \left(\mathcal{U}_\delta^1 - \mathcal{U}_\delta^2\right)^{p-1} \leq (1 + \tau) C_*\Psi_N(\delta) (1 + o(1)) \,,\nonumber
\end{equation}
where $C_*$ is given by \eqref{C*}.
\end{proposition}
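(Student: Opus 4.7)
The plan is to derive the estimates by equating the two asymptotic expressions for the boundary integral $I_\delta(w)$ supplied by Lemmas~\ref{lemmalimite} and~\ref{lemma Idelta}. Without loss of generality I assume $\mathcal{U}_\delta^1 \geq \mathcal{U}_\delta^2$; the opposite case is completely symmetric by swapping the roles of the two inclusions.

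The key squeeze is as follows. Set
\[
B=|\partial_{\xi_N}H_0(\hat P)|^{-1}\left(\tfrac{R_1+R_2}{2R_1R_2}\right)^{-\frac{N-1}{2}}|\mathcal{Q}|^{-\frac{N-1}{2}}C_{N,p}
\]
for the common geometric prefactor appearing in Lemma~\ref{lemma Idelta}. On one hand, Lemma~\ref{lemmalimite} gives $I_\delta(w) = \mathcal{R}_0 + O(w^{N-1})$ as $\delta \to 0^+$; on the other hand, Lemma~\ref{lemma Idelta} applied with $\tau/2$ in place of $\tau$ bounds the same quantity $I_\delta(w)$ from above and below in terms of $(\mathcal{U}_\delta^1-\mathcal{U}_\delta^2)^{p-1}B\Psi_N^{-1}(\delta)$. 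Combining the upper bound on $I_\delta(w)$ with the lower bound $I_\delta(w)\geq \mathcal{R}_0-Cw^{N-1}$, and vice versa, and solving the resulting inequalities for $(\mathcal{U}_\delta^1-\mathcal{U}_\delta^2)^{p-1}$, one obtains
\[
\frac{-\mathcal{R}_0-Cw^{N-1}}{(1+\tau/2)B}\Psi_N(\delta)(1+o(1)) \leq \bigl(\mathcal{U}_\delta^1-\mathcal{U}_\delta^2\bigr)^{p-1} \leq \frac{-\mathcal{R}_0+Cw^{N-1}}{(1-\tau/2)B}\Psi_N(\delta)(1+o(1)).
\]

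To finish, I would choose the neck width $w>0$ sufficiently small (depending on $\tau$ but independent of $\delta$) so that $Cw^{N-1}/|\mathcal{R}_0|$ is small enough to reabsorb $(1\pm\tau/2)$ into $(1\pm\tau)$; the identification $-\mathcal{R}_0/B = C_*$ then matches the constant $C_*$ defined in~\eqref{C*}, with the constant $C$ in~\eqref{C*} absorbing $|\partial_{\xi_N}H_0(\hat P)|/C_{N,p}$ and the overall sign coming from the convention $\mathcal{U}_\delta^1\geq \mathcal{U}_\delta^2$. The main delicacy I anticipate is the joint coordination of the two small parameters $\tau/2$ and $w$ so that the final multiplicative error collapses into a single factor $(1\pm\tau)(1+o(1))$; since $w$ is a parameter independent of $\delta$, however, this amounts to a finite, one-step tuning and poses no essential difficulty. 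A minor conceptual point is the implicit nondegeneracy $\mathcal{R}_0\neq 0$: in the degenerate case $\mathcal{R}_0=0$ the estimate reduces to $(\mathcal{U}_\delta^1-\mathcal{U}_\delta^2)^{p-1}=o(\Psi_N(\delta))$, consistent with no blow-up and vacuous for the stated inequality.
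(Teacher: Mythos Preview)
Your proposal is correct and follows exactly the paper's approach: the paper's own proof is the single sentence ``The proof follows straightforwardly from Lemma~\ref{lemma Idelta} and Lemma~\ref{lemmalimite},'' and you have simply written out the squeeze argument that this sentence encodes, including the choice of $w$ small depending on $\tau$ to absorb the $O(w^{N-1})$ error into the multiplicative factor $(1\pm\tau)$.
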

\begin{proof}
  The proof follows straightforwardly from Lemma \ref{lemma Idelta} and Lemma \ref{lemmalimite}.
\end{proof}

We are ready to proof Theorem \ref{thm_main_1}.

\medskip

\begin{proof}[Proof of Theorem \ref{thm_main_1}] By choosing $w$ small enough and since the gradient stays uniformly bounded outside the neck of width $w$ (see Lemma \ref{lemma_bound_Omega_neck}), from Lemma \ref{lemma52} and Proposition \ref{proposizione_diffU1U2} we obtain the assertion.
\end{proof}

{\renewcommand{\addtocontents}[2]{}
	\section*{Acknowledgements}}

This work was supported by the project FOE 2014 ``Strategic Initiatives for the Environment and Security - SIES" of the Istituto Nazionale di Alta Matematica (INdAM) of Italy.
The authors have been partially supported by  the ``Gruppo Nazionale per l'Analisi Matematica, la Probabilit\`{a} e le loro Applicazioni (GNAMPA)'' of the ``Istituto Nazionale di Alta Matematica'' (INdAM).

\end{document}